\newtheorem{theorem}{Theorem}[section]
\newtheorem{lemma}[theorem]{Lemma}
\newtheorem{prop}[theorem]{Proposition}
\newtheorem{cor}[theorem]{Corollary}
\theoremstyle{definition}
\theoremstyle{remark}
\newtheorem{remark}[theorem]{Remark}
\numberwithin{equation}{section}
\newcommand{\abs}[1]{\left\lvert#1\right\rvert}
\newcommand{\ds}{\displaystyle}
\newcommand{\CB}{\mathcal{B}}
\newcommand{\CC}{\mathcal{C}}
\newcommand{\CE}{\mathcal{E}}
\newcommand{\CH}{\mathcal{H}}
\newcommand{\CI}{\mathcal{I}}
\newcommand{\CJ}{\mathcal{J}}
\newcommand{\CM}{\mathcal{M}}
\newcommand{\CY}{\mathcal{Y}}
\begin{document}

\title[Distribution of values of short hybrid exponential sums II]
{The distribution of values of short hybrid exponential sums on curves over finite fields II}

\author{Kit-Ho Mak}

\subjclass[2010]{Primary 11G20, 11T23, 11T24}
\keywords{Gaussian distribution, hybrid exponential sums, algebraic curves}

\begin{abstract}
Let $p$ be a prime number, $C$ be any absolutely irreducible affine plane curve over $\mathbb{F}_p$, $g,f\in\mathbb{F}_p(x,y)$, and $\mathcal{J}$ be an interval. We continue the study of the distribution of the values of short hybrid exponential sums of the form
\begin{equation*}
S_{H}(x;C) = \sum_{\substack{P\in C, x<x(P)\leq x+H \\ y(P)\in\mathcal{J}}}\chi(g(P))\psi(f(P))
\end{equation*}
on $x\in\mathcal{I}$ for some short interval $\mathcal{I}$. We show that under some natural conditions, the limiting distribution of the sum $S_{H}(x;C)$ is Gaussian for all curve $C$. This largely generalizes a previous result of the author and Zaharescu.
\end{abstract}

\maketitle

\section{Introduction}

One of the main theme of research in analytic number theory is to understand the distribution of short character sums and exponential sums. Let $p$ be a prime, let $\chi$ be a multiplicative character modulo $p$, and let $\psi$ be an additive character modulo $p$. Let $f,g\in\mathbb{F}_p(x)$ be two rational functions. The short hybrid exponential sum is defined by
\begin{equation}\label{eqnshorthybsum}
S_{H}(x)=\sum_{x<n\leq x+H}\chi(g(n))\psi(f(n)),
\end{equation}
where $H=H(p)\leq p$. We employ the convention that all sums in our paper will exclude the poles of $f$ and $g$. When $\chi$ is the quadratic character, $g(x)=x$ and $\psi$ is trivial, Davenport and Erd\"{o}s \cite{DaEr52} showed that the resulting character sum
\begin{equation}\label{eqnshortsum}
S_{H}(x)=\sum_{x<n\leq x+H}\chi(n)
\end{equation}
tends to a Gaussian distribution with mean zero and variance $H$ when $p$ tends to infinity, provided that $H\rightarrow\infty$  and $\log{H}/\log{p}\rightarrow 0$ as $q\rightarrow\infty$. More precisely, they showed that under such conditions,
\begin{equation*}
\lim_{p\rightarrow\infty}\frac{1}{p}\abs{\{ 0\leq x\leq p-1: S_H(x) \leq \lambda\sqrt{H} \}}=\frac{1}{\sqrt{2\pi}}\int_{-\infty}^{\lambda}e^{-t^2/2}\,dt.
\end{equation*}
In \cite{CCZ03}, the result of Davenport and Erd{\"o}s is generalized to the case of an $n$-dimensional sum of quadratic characters of the form
\begin{equation*}
S_H(x_1,\ldots,x_n)=\sum_{x_1<z_1\leq x_1+H}\cdots\sum_{x_n<z_n\leq x_n+H}\chi(z_1+\ldots+z_n),
\end{equation*}
and obtained a Gaussian distribution in that case. Lamzouri \cite{Lam13} studied the sum \eqref{eqnshortsum} when $\chi$ is non-real, and obtained a two dimensional Gaussian distribution for such sums.

In a previous paper \cite{MaZa11}, the author and Zaharescu investigated a more general type of short hybrid exponential sums over a plane curve. Let $p$ be a prime, and let $C$ be an absolutely irreducible affine curve over $\mathbb{F}_p$ of degree $D$, defined by the equation $P(x,y)=0$ with $\deg_yP(x,y)\geq 1$, where $\deg_y$ denotes the degree in $y$. Let $\chi$ be a multiplicative character and $\psi$ be an additive character modulo $p$. Let $f,g\in\mathbb{F}_p[x,y]$ be two rational functions on $C$, and let and $\mathcal{J}=[\alpha p,\beta p)$ be an interval ($0\leq \alpha<\beta \leq 1$). Define the short hybrid exponential sum over $C$ by
\begin{equation}\label{eqnshortCsum}
S(x) = S_{H}(x;C) := \sum_{\substack{P\in C, x<x(P)\leq x+H \\ y(P)\in\mathcal{J}}}\chi(g(P))\psi(f(P)).
\end{equation}
Note that when $C$ is the affine line $y=0$, the above sum \eqref{eqnshortCsum} reduces to the hybrid sum \eqref{eqnshorthybsum}. The author and Zaharescu proved that the distribution of $S(x)$ tends to a Gaussian distribution when projected to any line through the origin, under some natural assumptions (to ensure the sum $S(x)$ is not trivial) on $\chi,\psi,f,g$, and under the condition on $C$ that no two points on $C$ with $y$-coordinates in $\CJ$ can share a common $x$-coordinates. More precisely, let $\CI$ be an interval of length at least $p^{1/2+\epsilon}$. Set
\begin{equation}\label{def:ux}
u_{\theta}(x)=\frac{S(x)e^{-i\theta}+\overline{S(x)}e^{i\theta}}{2((\beta-\alpha)H)^{1/2}},
\end{equation}
which is the (suitably normalized) projection of $S(x)$ on the line $y=e^{i\theta}x$, and let $G_{p,\theta}(\lambda)$ be the number of $x$ with $u_{\theta}(x)\leq\lambda$, then 
\begin{equation*}
\lim_{p\rightarrow\infty} \frac{G_{p,\theta}(\lambda)}{\abs{\CI}}=\frac{1}{\pi}\int_{-\infty}^{\lambda}e^{-t^2}\,dt
\end{equation*}
when $S(x)$ is not real. Note that here the normalization required is different from that of Davenport-Erd\"{o}s. When $S(x)$ is real then we obtained the same Gaussian distribution as in the case for Devenport-Erd\"{o}s. 

Although the work \cite{MaZa11} generalized the result of Davenport-Erd\"{o}s to a large extend, the condition that no two points on $C$ with $y$-coordinates in $\CJ$ can share a common $x$-coordinates is a serious restriction. This restriction essentially says that the results in \cite{MaZa11} are only valid when $C$ is a rational curve or a hyperelliptic curve (and $\CJ$ sits inside $[0,(p-1)/2]$ or $[(p+1)/2,p-1]$). It is the aim of this paper to remove this restriction and obtain Gaussian distributions for short hybrid exponential sums of the form \eqref{eqnshortCsum} over \textit{all} plane curves. We will also generalize the arguments in \cite{MaZa11,Lam13} to study the two dimensional distribution of such sums on the complex plane. Note that our results on the two-dimensional distribution are new as long as $\psi$ is non-trivial or if $f(n)\neq n$, even when $C$ is the affine line. The new idea here is to introduce certain families of curves related to the exponential sums, and study their properties as a fibration using deeper tools from algebraic geometry. This enables us to study the sums \eqref{eqnshortCsum} in general.

\section{Statements of Main Results}

Let $p$ be a large prime, and $C$ be an absolutely irreducible affine plane curve over $\mathbb{F}_p$ defined by the equation $P(x,y)=0$ that is not a vertical line (i.e. the line defined by $x=0$). Let $D$ be the degree of $C$. Let $\chi$ be a multiplicative character and let $\psi$ be an additive character modulo $p$, not both trivial. Let $f,g\in\mathbb{F}_p(x,y)$ be two rational functions on $C$. Let $\CI\subseteq [0,p-1]$ and $\CJ=[\alpha p,\beta p)$ be intervals, where $0\leq \alpha < \beta \leq 1$. We also let $H$ be an integer such that $1\leq H \leq p$. Since $C$ is irreducible, a standard argument using completions of exponential sums shows that the number of points $N$ on $C$ inside the rectangle $(x,x+H]\times\CJ$ is given by (see for example \cite{MaZa12})
\begin{equation*}
N = (\beta-\alpha)H + O(\sqrt{p}\log^2{p}).
\end{equation*}
We are interested in the distribution of the values of the hybrid exponential sums \eqref{eqnshortCsum}, namely,
\begin{equation*}
S(x) = S_{H}(x;C) := \sum_{\substack{P\in C, x<x(P)\leq x+H \\ y(P)\in\mathcal{J}}}\chi(g(P))\psi(f(P)),
\end{equation*} 
for $x\in\CI$ as $p$ tends to infinity. It is understood that the poles of $f,g$ are excluded from the sum.

As observed in the case when $C$ is the hyperelliptic curve \cite{MaZa11}, the distribution of $S(x)$ depends on whether it is complex (i.e. non-real) or real. If the sum $S(x)$ is non-trivial, it is complex unless $\psi$ is trivial and $\chi$ is quadratic. We are able to show that the distribution of $S(x)$ is Gaussian in both cases. We will deal with the complex case first, and show that $S(x)$ has a two-dimensional Gaussian distribution.

\begin{theorem}\label{thm1}
Let $p$ be a prime. Let $C$ be an absolutely irreducible curve over $\mathbb{F}_p$ defined by the equation $P(x,y)=0$. Let $g,f\in\mathbb{F}_p(x,y)$ be two rational functions. Let $\chi$, $\psi$, $\CI$, $\CJ$, $H$ be as above. 

If $\psi$ is nontrivial, we assume $f$ is nonlinear on $C$, i.e. $f$ is not of the form
\[\text{(linear terms)}+h_2(x,y)P(x,y)^b\]
for any nonzero integer $b$, rational functions $h_1\in\overline{\mathbb{F}}_p(x,y), h_2\in\mathbb{F}_p(x,y)$, with $h_2$ relatively prime to $P$ (in this paper, all ``linear terms'' have coefficients in $\mathbb{F}_p$). 
Let $f=\frac{f_1}{f_2}$ with $f_1,f_2\in\mathbb{F}_p[x,y]$, where $f_1,f_2$ have no common factors and both are of degree less than $p$. Assume that
\begin{enumerate}
\item if $f$ is a polynomial, then write $f(x,y)=r_1(x)+r_2(x,y)$, where $r_1$ consists of all terms which do not depend on $y$. We further assume that either
    \begin{enumerate}
    \item $r_2$ is nonlinear on $C$, or
    \item if $r_2$ is linear on $C$, we assume that $\deg{r_1}>2$.
    \end{enumerate}
\item if $f$ is not a polynomial, i.e. $\deg{f_2}\geq 1$, then assume that $\deg{f_2}=o(\log{p})$ is small.
\end{enumerate}

On the other hand, if $\psi$ is trivial, then we assume that $\chi$ has order $a>2$ (so that $S(x)$ is complex). Assume $g(x,y)$ is not of the form
\begin{equation}\label{eqnthm21} 
h_1^a+h_2(x,y)P(x,y)^b
\end{equation}
for any nonzero integer $b$, $h_2\in\mathbb{F}_p(x,y)$ relatively prime to $P$ and $h_1\in\overline{\mathbb{F}}_p(x,y)$, and if $a$ is even, we further assume that $g(x,y)$ is not a complete $(a/2)$-th power on $C$ (i.e. \eqref{eqnthm21} cannot hold with $a/2$ in place of $a$).

Fix an $R\subseteq \mathbb{C}$ to be a rectangle whose edges are parallel to the coordinate axes (so that $R$ does not change with $p$). If 
\begin{equation*}
\liminf_{p\rightarrow\infty}\frac{\log{\abs{\CI}}}{\log{p}}>\frac{1}{2},
\end{equation*}
$\log{H}=o(\log{p})$, and $H\rightarrow\infty$ as $p\rightarrow\infty$, then we have
\begin{equation*}
\lim_{p\rightarrow\infty}\frac{1}{\abs{\CI}}\abs{ \left\{x\in\CI: \frac{S(x)}{\sqrt{H(\beta-\alpha)/2}}\in R \right\}} = \frac{1}{2\pi}\int\int_R e^{-\frac{x^2+y^2}{2}}\,dxdy.
\end{equation*}
\end{theorem}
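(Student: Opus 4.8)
The plan is to establish the two-dimensional Gaussian distribution via the method of moments, following the strategy of \cite{MaZa11,Lam13} but replacing the ``no common $x$-coordinate'' hypothesis with an analysis of an auxiliary family of curves. Write $S(x) = X(x) + iY(x)$ with $X,Y$ real. By the Cramér–Wold device it suffices to show that for every fixed real $\theta$, the normalized projection
\[
u_\theta(x) = \frac{S(x)e^{-i\theta} + \overline{S(x)}e^{i\theta}}{2\sqrt{H(\beta-\alpha)/2}}
\]
has all its moments over $x \in \CI$ converging to the moments of a standard one-dimensional Gaussian; combined with the non-reality assumptions, which force a genuine two-dimensional spread (the variance in the $\theta$-direction must be independent of $\theta$), this yields convergence to the bivariate Gaussian on $R$. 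So the first step is to expand, for each fixed integer $k \geq 1$,
\[
\frac{1}{\abs{\CI}}\sum_{x\in\CI} u_\theta(x)^k = \frac{1}{(2\sqrt{H(\beta-\alpha)/2})^k\abs{\CI}}\sum_{x\in\CI}\ \sum_{\varepsilon_1,\dots,\varepsilon_k}\ e^{-i\theta\sum\varepsilon_j}\sum_{P_1,\dots,P_k} \prod_{j}\left(\chi^{\varepsilon_j}(g(P_j))\psi(\varepsilon_j f(P_j))\right),
\]
where each $\varepsilon_j \in \{+1,-1\}$ records whether the $j$-th factor is $S(x)$ or $\overline{S(x)}$, and each $P_j$ ranges over points of $C$ in $(x,x+H]\times\CJ$.

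The second step is to swap the order of summation so that $x$ runs on the outside over a short interval and the $P_j$ become the governing variables. For each fixed choice of signs and of the ``shape'' of the configuration (which $x$-coordinates coincide, which are distinct), the count of admissible $x\in\CI$ is, by the completion-of-sums estimate quoted in the excerpt (the $N = (\beta-\alpha)H + O(\sqrt p \log^2 p)$ bound applied to translates), $\min_j$-type lengths up to error terms, so the inner sum reduces to a complete sum over $C$ of $\prod_j \chi^{\varepsilon_j}(g(P_j))\psi(\varepsilon_j f(P_j))$ subject to the coincidence pattern. The combinatorics is exactly as in \cite{MaZa11,Lam13}: the diagonal terms — where the $P_j$ pair off so that each matched pair contributes $\abs{S(x)}^2$-type factors $\chi(g)\overline{\chi(g)}\psi(f)\overline{\psi(f)} = 1$ — survive and, after counting the number of such pairings and accounting for the $e^{-i\theta\sum\varepsilon_j}$ weights, produce precisely the Gaussian moment $\mu_k$ (with the $1/2$ in the normalization coming from the projection); all off-diagonal terms must be shown to be $o(1)$.

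The third and crucial step is bounding these off-diagonal complete sums. This is where the new idea enters: rather than a point-by-point argument on $C$, I would realize the relevant sum $\sum_{P_1,\dots,P_m\in C}\prod \chi^{\varepsilon_j}(g(P_j))\psi(\varepsilon_j f(P_j))$ (over a non-diagonal configuration, with possible coincidences of $x$-coordinates imposed) as an exponential sum over the $\mathbb{F}_p$-points of an auxiliary variety — a fibre product of copies of $C$ glued along shared $x$-coordinates — and invoke Weil/Deligne-type bounds (e.g.\ via the Bombieri–Katz machinery for exponential sums on curves, or Adolphson–Sperber for the mixed case) fibre by fibre over the common $x$-coordinate. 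The hypotheses on $f$ and $g$ in Theorem~\ref{thm1} — that $f$ is nonlinear on $C$ modulo the $P(x,y)^b$ ambiguity, the $\deg r_1 > 2$ or nonlinearity of $r_2$ in the polynomial case, the smallness $\deg f_2 = o(\log p)$ in the non-polynomial case, and the ``not a complete $(a/2)$-th power'' condition on $g$ when $\psi$ is trivial — are exactly what guarantee that on each such auxiliary curve the summand is not ``trivial'' (not constant, not a pure $a$-th power times an additive coboundary), so that each fibre contributes at most $O_D(\sqrt p)$ with an implied constant depending only on the degrees; summing over the $\ll \sqrt p$ relevant fibres and dividing by $\abs{\CI} \gg p^{1/2+\epsilon}$ and by $H^{k/2} \to \infty$ kills these terms.

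The main obstacle I anticipate is precisely the uniform control of the auxiliary curves: one must verify that the fibre product of copies of $C$ over the $x$-line, cut out by the distinctness conditions, is geometrically irreducible (or decomposes into a controlled number of components of bounded degree) for all but $O(1)$ values of the base parameter, and that the restriction of $\chi(g)\psi(f)$ to it remains non-degenerate — this is the content of ``study their properties as a fibration using deeper tools from algebraic geometry.'' The degenerate loci where irreducibility fails, or where $f$ or $g$ accidentally becomes linear / a perfect power on a component, must be shown to be of bounded size, uniformly in $p$, and this is where the precise technical form of the hypotheses on $f$ (the $P^b$ ambiguity, the degree conditions) and on $g$ (the $(a/2)$-th power exclusion when $a$ is even) has to be used in an essential way; handling the non-polynomial case, where the smallness $\deg f_2 = o(\log p)$ is needed to absorb a pole-related loss of $\exp(O(\deg f_2))$ against the $\log H = o(\log p)$ budget, will require a separate and somewhat delicate bookkeeping of the contribution of the zeros and poles of $f_2$.
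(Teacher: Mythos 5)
Your high-level strategy---method of moments together with a Cram\'er--Wold reduction of the bivariate statement to the one-dimensional projections $u_\theta$, with the new ingredient being fibre products of shifted copies of $C$ over the $x$-line---is a legitimate repackaging of the paper's argument, and you do put your finger on the central new idea. (The paper goes the other way: it computes the joint characteristic function $\phi(u,v)$ following Lamzouri/Selberg and deduces the $u_\theta$ statement as Corollary~\ref{cor1}; since the moments of $u_\theta$ are trigonometric combinations of the mixed moments $M(r,s)=\sum_{x}(\mathrm{Re}\,S)^r(\mathrm{Im}\,S)^s$, the two packagings are computationally equivalent for the qualitative limit.) However, your bookkeeping of the off-diagonal error is wrong in a way that matters. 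You do not ``reduce to a complete sum over $C$'' and then apply a bound ``fibre by fibre over the common $x$-coordinate,'' with ``$\ll\sqrt p$ relevant fibres.'' The object is the curve $C_{\mathbf h}\subset\mathbb A^{r+1}$ cut out by $P(x+h_i,y_i)=0$, $1\le i\le r$, and what is applied is Perel'muter's extension of Bombieri--Weil to \emph{incomplete} hybrid sums over space curves (Lemma~\ref{lem:perel}), which gives $O\bigl(d^{O(r)}\sqrt p\,\log^{r+1}p\bigr)$ for each fixed shift-tuple $\mathbf h$. There are $\ll H^{r}$ such tuples; the total off-diagonal error is $\ll H^{r}d^{O(r)}\sqrt p\,\log^{r+1}p$, which is $o(H^{r/2}\abs{\CI})$ precisely because $\abs{\CI}\gg p^{1/2+\epsilon}$ and $H=p^{o(1)}$. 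The fibres over a fixed $x$ are finite sets and a Weil-type bound applied to them would give nothing.

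The deeper gap is that the irreducibility of the auxiliary curves, which is what replaces the rational/hyperelliptic restriction of the earlier work and is the genuinely new content, is only gestured at. The paper's Proposition~\ref{prop51} proves that all but $O_r(H^{r-1})$ of the $C_{\mathbf h}$ with $0<h_i\le H$ are absolutely irreducible, by passing to the normalization $\tilde{\CC}^r\dashrightarrow\mathbb P^{r-1}$ of the difference fibration $(x_1,\dots,x_r,y_1,\dots,y_r)\mapsto(x_2-x_1,\dots,x_r-x_1)$, using Zariski's connectedness theorem over the normal base $\mathbb P^{r-1}$ to obtain connectedness of every fibre, and showing that fibres over non-critical values are smooth, hence irreducible; the critical values lie on a bounded union of proper linear subspaces, so only $O_r(H^{r-2})$ tuples in a box of side $H$ are bad. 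This is not a routine check and cannot be deferred to ``controlled number of components of bounded degree.'' Two further steps you omit are also essential: the refinement $C^{o}_{\mathbf h}$ and the $\CY$ / $\mathbf t(\CY)$ combinatorics, needed because distinct $P_j$ may coincide as points (not merely share an $x$-coordinate), without which the diagonal count producing the Gaussian moment $\mu_k$ is wrong; and Proposition~\ref{prop53}, which shows that $F(x,\CY)=\sum_j\pm f(x+h_j,y_j)$ is not of the Artin--Schreier trivial form $\tilde h^p-\tilde h+\text{linear}$ on any component of $C_{\mathbf t}$ unless it is identically zero, and which is exactly where the hypotheses on $f$ (nonlinearity of $r_2$, or $\deg r_1>2$, or $\deg f_2=o(\log p)$) and on $g$ are actually consumed. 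You name these hypotheses as relevant, but you supply no argument, and the polynomial and non-polynomial cases in Proposition~\ref{prop53} require separate treatments.
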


Let $u_{\theta}(x)$ be the normalized projection of $S(x)$ onto the line $y=e^{i\theta}x$ as defined in \eqref{def:ux}. We can recover the main theorems in \cite{MaZa11} about the distribution of $u_{\theta}(x)$ immediately from Theorem \ref{thm1}, and show that the same distribution holds over \textit{all} (absolutely irreducible) curves.
\begin{cor}\label{cor1}
Notations and assumptions are as in Theorem\ref{thm1}. For any $\lambda\geq 0$, let $G_p(\lambda)$ be the number of $x\in\CI$ with $u_{\theta}(x)\leq\lambda$. We have 
\begin{equation*}
\lim_{p\rightarrow\infty}\frac{G_p(\lambda)}{\abs{\CI}}=\frac{1}{\sqrt{\pi}}\int_{-\infty}^{\lambda}e^{-t^2}\,dt
\end{equation*}
for any $\theta$.
\end{cor}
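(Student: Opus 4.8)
The plan is to deduce the one-dimensional statement about $u_\theta(x)$ from the two-dimensional Gaussian distribution of $S(x)/\sqrt{H(\beta-\alpha)/2}$ established in Theorem \ref{thm1}. The key observation is that $u_\theta(x)$ is, up to a constant, the real part of $e^{-i\theta}S(x)$: writing $z = S(x)/\sqrt{H(\beta-\alpha)/2}$, we have from \eqref{def:ux} that
\[
u_\theta(x) = \frac{S(x)e^{-i\theta} + \overline{S(x)}e^{i\theta}}{2((\beta-\alpha)H)^{1/2}} = \frac{1}{\sqrt 2}\,\mathrm{Re}\!\left(e^{-i\theta} z\right).
\]
So the event $\{u_\theta(x)\le \lambda\}$ is the event that $z$ lies in the half-plane $\{w\in\mathbb{C}: \mathrm{Re}(e^{-i\theta}w)\le \sqrt 2\,\lambda\}$, which is a rotated half-plane through a point at signed distance $\sqrt 2\,\lambda$ from the origin.

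First I would note that Theorem \ref{thm1} gives convergence of the empirical distribution of $z = z(x)$, $x\in\CI$, to the standard complex Gaussian measure $\mu$ (density $\frac{1}{2\pi}e^{-(x^2+y^2)/2}$) when tested against indicators of axis-parallel rectangles. Since finite unions of such rectangles form a convergence-determining class — they generate the Borel $\sigma$-algebra and $\mu$ assigns no mass to the boundary of any such set — the empirical measures converge weakly to $\mu$. Weak convergence then implies convergence of measure for every $\mu$-continuity set, in particular for every half-plane $H_{\theta,\lambda} = \{\mathrm{Re}(e^{-i\theta}w)\le\sqrt 2\,\lambda\}$, whose boundary is a line and hence $\mu$-null. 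Therefore
\[
\lim_{p\to\infty}\frac{G_p(\lambda)}{|\CI|} = \mu\!\left(H_{\theta,\lambda}\right).
\]
It remains to compute this Gaussian measure of the half-plane. By rotational invariance of $\mu$, $\mu(H_{\theta,\lambda})$ does not depend on $\theta$ and equals $\int_{-\infty}^{\sqrt 2\lambda}\frac{1}{\sqrt{2\pi}}e^{-s^2/2}\,ds$, the one-dimensional standard Gaussian tail (integrating out the coordinate parallel to the boundary line). Substituting $s = t\sqrt 2$ gives $\frac{1}{\sqrt\pi}\int_{-\infty}^{\lambda}e^{-t^2}\,dt$, which is exactly the claimed limit.

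The only point requiring a little care — and the closest thing to an obstacle — is justifying the passage from "convergence against axis-parallel rectangles" to "weak convergence", i.e. to convergence against indicators of half-planes. One clean way is to avoid invoking abstract weak-convergence machinery for a sequence of non-probability counting measures and instead argue directly: approximate the half-plane $H_{\theta,\lambda}$ from inside and outside by finite unions of small axis-parallel rectangles, so that the symmetric difference has $\mu$-measure $<\varepsilon$; apply Theorem \ref{thm1} to each of the finitely many rectangles; and use the fact (itself a consequence of the point-count $N = (\beta-\alpha)H + O(\sqrt p \log^2 p)$ together with the rectangle estimates) that the number of $x\in\CI$ with $z(x)$ in a thin annular region is controlled, so the approximation error in the empirical count is also $O(\varepsilon|\CI|)$ in the limit. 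Letting $\varepsilon\to 0$ after $p\to\infty$ yields the result. This is entirely routine once Theorem \ref{thm1} is in hand, so the corollary is essentially a formal consequence.
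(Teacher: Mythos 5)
Your proposal is correct and matches the paper's intent: the paper presents Corollary \ref{cor1} as an immediate consequence of Theorem \ref{thm1} and gives no separate proof, and your argument correctly supplies the standard details (projection onto the line, approximation of the half-plane by rectangles, rotational invariance, and the change of variables giving $\frac{1}{\sqrt{\pi}}\int_{-\infty}^{\lambda}e^{-t^2}\,dt$). One small simplification: the normalized empirical measures $\frac{1}{|\CI|}\sum_{x\in\CI}\delta_{z(x)}$ are in fact probability measures, and Theorem \ref{thm1} applied to a large fixed square already yields tightness, so the approximation argument goes through without any need to invoke the point-count $N=(\beta-\alpha)H+O(\sqrt{p}\log^2 p)$.
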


For the real case, the distribution of $S(x)$ is also (one-dimensional) Gaussian. As in the case for Davenport-Erd\"{o}s, the normalization required is different from the complex case.
\begin{theorem}\label{thm2}
Let $p$ be a prime. Let $C$ be an absolutely irreducible curve over $\mathbb{F}_p$ defined by the equation $P(x,y)=0$. Let $g\in\mathbb{F}_p(x,y)$ be a rational function. Assume that $\CI$, $\CJ$, $H$ satisfy the assumptions in Theorem \ref{thm1}, and that $S(x)$ is real, i.e. $\psi$ is trivial and $\chi$ is the quadratic character. Suppose $g(x,y)$ is not of the form
\[\ds h_1^2+h_2(x,y)P(x,y)^b \]
for any nonzero integer $b$, $h_2\in\mathbb{F}_p(x,y)$ relatively prime to $P$ and $h_1\in\overline{\mathbb{F}}_p(x,y)$. For any $\lambda\geq 0$, let $G_p(\lambda)$ be the number of $x\in\CI$ with $S(x)\leq\lambda(H(\beta-\alpha))^{1/2}$. We have
\begin{equation*}
\lim_{p\rightarrow\infty}\frac{G_p(\lambda)}{\abs{\CI}}=\frac{1}{\sqrt{2\pi}}\int_{-\infty}^{\lambda}e^{-\frac{t^2}{2}}\,dt.
\end{equation*}
\end{theorem}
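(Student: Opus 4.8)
The plan is to reduce Theorem \ref{thm2} to Theorem \ref{thm1} via the method of moments, exactly as Davenport--Erd\H{o}s reduce their real result to a moment computation, and as the complex case is presumably handled in the body of the paper. Since $S(x)$ is real (because $\psi$ is trivial and $\chi$ is quadratic), we set $T(x) = S(x)/\sqrt{H(\beta-\alpha)}$ and aim to show that for every nonnegative integer $k$,
\[
\frac{1}{\abs{\CI}}\sum_{x\in\CI} T(x)^k \longrightarrow \frac{1}{\sqrt{2\pi}}\int_{-\infty}^{\infty} t^k e^{-t^2/2}\,dt
\]
as $p\to\infty$, i.e. the $k$-th moment tends to the $k$-th moment of the standard Gaussian (which is $0$ for odd $k$ and $(k-1)!!$ for even $k$). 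Since the Gaussian is determined by its moments (Carleman's condition), this forces convergence in distribution, which is equivalent to the stated convergence of $G_p(\lambda)/\abs{\CI}$.

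First I would expand $T(x)^k$ as a $k$-fold sum over points $P_1,\dots,P_k$ on $C$ with $x$-coordinates in $(x,x+H]$ and $y$-coordinates in $\CJ$, of $\prod_{j} \chi(g(P_j))$ (no $\psi$ factor, since $\psi$ is trivial), and then swap the order of summation so that the sum over $x\in\CI$ is on the inside. The key technical input is the same as in the proof of Theorem \ref{thm1}: one introduces the auxiliary family of curves cut out by fixing the relative positions of the $P_j$ in the $x$-direction (the ``shift'' parameters $h_1,\dots,h_{k-1}$), and one uses the geometric nondegeneracy hypothesis on $g$ (that it is not of the form $h_1^2 + h_2 P^b$, i.e. not a complete square on $C$ up to the ideal of $C$) together with Weil-type bounds for the resulting complete character sums along the fibration to show that the $x$-sum exhibits square-root cancellation unless the configuration is ``diagonal'', meaning the points pair up so that the product $\prod \chi(g(P_j))$ is identically $1$. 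The count of diagonal configurations, weighted correctly, produces exactly $(k-1)!!\,(H(\beta-\alpha))^{k/2}$ to leading order (this is where the combinatorics of perfect matchings enters, and where the normalization by $\sqrt{H(\beta-\alpha)}$ rather than $\sqrt{H(\beta-\alpha)/2}$ is forced: there is no freedom to split into real and imaginary parts, so the variance is $H(\beta-\alpha)$, not half of it). The off-diagonal terms contribute $O_k(p^{1/2+o(1)} H^{k})$ after summing trivially over the shift parameters, which is negligible compared to $\abs{\CI} H^{k/2} \gg p^{1/2+\epsilon} H^{k/2}$ provided $\log H = o(\log p)$ and $H\to\infty$; the conditions $\abs{\CI}\gg p^{1/2+\epsilon}$ and $\log H=o(\log p)$ are used precisely here, just as in Theorem \ref{thm1}.

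The main obstacle is the same one that Theorem \ref{thm1} must already overcome: verifying that the character sum attached to each non-diagonal configuration of shift parameters genuinely has square-root cancellation, which requires controlling the monodromy (or, concretely, verifying the hypotheses of the relevant Weil bound / Bombieri-type estimate) of the fiber curves uniformly over the family, and in particular ruling out the degenerate case where $\prod_j g(x+h_j, \cdot)$ becomes a perfect square on $C$ for some special choice of the $h_j$ even though $g$ itself is not. This is exactly the role of the hypothesis that $g$ is not of the form $h_1^2 + h_2(x,y)P(x,y)^b$, and of the fibration analysis that is the ``new idea'' advertised in the introduction; once Theorem \ref{thm1}'s machinery is in place, the real case is strictly easier, since there is no additive character to complicate the sums and no two-dimensional bookkeeping. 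I would therefore organize the write-up as: (i) set up moments and reduce to a Gaussian moment identity; (ii) quote the off-diagonal estimate from the proof of Theorem \ref{thm1} (or re-derive it in the simpler $\psi$-trivial setting); (iii) evaluate the diagonal contribution by the matching combinatorics; (iv) invoke the moment problem to conclude.
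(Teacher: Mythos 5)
Your proposal is correct and follows essentially the same route as the paper: it reduces Theorem~\ref{thm2} to a moment computation with the normalization $S(x)/\sqrt{H(\beta-\alpha)}$, splits the expanded $k$-fold sum into diagonal configurations (counted by perfect matchings, giving $(k-1)!!$) and off-diagonal ones killed by a Weil/Perel'muter bound along the auxiliary shifted-curve fibration, and closes by the moment method. The paper's actual proof carries this out through Theorem~\ref{thm42}, Lemma~\ref{lemSj12real}, the curves $C_{\mathbf{h}}$ and $C^o_{\mathbf{h}}$ of Section~\ref{sec5}, and the final Riemann--Stieltjes argument in Section~8.2, so your outline matches the structure of the argument step for step.
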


The proof of these theorems will be based on estimates for various moments attached to $S(x)$ (see Theorem \ref{thm41} and Theorem \ref{thm42}). In the course of estimation we will need to understand the algebraic geometry of some curves related to $C$ (see Section \ref{sec5}). These moment estimates may be of independent interests. We will end this section with several remarks discussing several aspects of the theorems.

\begin{remark}
As in the case when $C$ is the affine line, Theorem \ref{thm1} and \ref{thm2} is in some sense the best possible with respect to the range of $\abs{\CI}$, and $f$ has to be non-linear. See \cite[Example 2.6, 2.7]{MaZa11} for counterexamples of Corollary \ref{cor1} when these conditions are violated. These are also counterexamples to Theorem \ref{thm1} and \ref{thm2} when the conditions are violated.
\end{remark}

\begin{remark}
On the other hand, the assumption that $C$ is absolutely irreducible can be removed provided that $g$, $f$ satisfy the assumptions of Theorem \ref{thm1} and \ref{thm2} for each irreducible component of $C$ that are defined over $\mathbb{F}_p$. We outline the argument here. If $C$ is any plane curve, let $\Gamma_1,\ldots,\Gamma_t$ be the absolutely irreducible components of $C$. Some of the $\Gamma_i$ are defined over $\mathbb{F}_p$ and some may not. For those $\Gamma_i$ defined over $\mathbb{F}_p$, our theorems apply and we get a Gaussian distribution for $S_H(x;\Gamma_i)$. For those components that are not defined over $\mathbb{F}_p$, it is well-known that those components have only finitely many points (i.e. $O(1)$ as $p$ tends to infinity) over $\mathbb{F}_p$. So these components do not affect the overall distribution as $p$ tends to infinity. Since any two components intersect only at finitely many points, we have $S_H(x;C)=\sum_i S_H(x;\Gamma_i)+O(1)$. Thus we get a Gaussian distribution for $S_H(x;C)$.
\end{remark}

\begin{remark}
It is also interesting to note that for any $x$, the number of summands in $S(x)$ is the number of points $N_{\CB_x}(C)$ on $C$ inside the rectangle $\CB_x=(x,x+H]\times\CJ$. It is very possible that the distribution of $N_{\CB_x}(C)$ is itself Gaussian, and this is known to be true in some special cases, including the modular hyperbola $xy=1$ \cite{GKS02} and hyperelliptic curves \cite{Mak14}. We hope that the ideas in this paper can shed some light to the problem of the distribution of $N_{\CB_x}(C)$, and more generally the distribution of points on curves inside rectangles that are too small for the Weil bound to be useful.
\end{remark}

\section{Some preliminary lemmas}

In this section, we collect together several lemmas that will be used later. The first lemma is an estimate for incomplete hybrid exponential sums over space curves is crucial in our argument. It is important to note that the lemma works even when the underlying curve is not absolutely irreducible.
\begin{lemma}\label{lem:perel}
Let $p$ be a large prime, and let $Y$ be an affine curve of degree $D$. Let $\tilde{g},\tilde{f}\in\mathbb{F}_p(x_1,\ldots,x_m)$ be two rational functions, and let $d_{\tilde{g}},d_{\tilde{f}}$ be the degrees of the denominators of $\tilde{g}, \tilde{f}$ respectively. Let $\chi$ be a multiplicative character and $\psi$ be an additive character, not both trivial. Let $\CJ_1,\ldots,\CJ_m\subseteq[0,p-1]$ be intervals, and define the hybrid exponential sum $S_{\CJ_1,\ldots,\CJ_m}$ by
\begin{equation*}
S_{\CJ_1,\ldots,\CJ_m} = \sum_{\textbf{x}\in Y\cap(\CJ_1\times\ldots\times\CJ_m)}\chi(\tilde{g}(\textbf{x}))\psi(\tilde{f}(\textbf{x})),
\end{equation*}
where $\textbf{x}=(x_1,\ldots,x_m)$. Let $a$ be the order of $\chi$. Suppose one of the following two conditions is satisfied:
\begin{enumerate}
\item $\chi$ is non-trivial, and there is no rational function $\tilde{g_1}\in\overline{\mathbb{F}}_p(x_1,\ldots,x_m)$ such that $\tilde{g}-\tilde{g_1}^a$ vanishes identically on some irreducible component of $Y$; or
\item $\psi$ is non-trivial, and there is no rational function $\tilde{f_1}\in\overline{\mathbb{F}}_p(x_1,\ldots,x_m)$ such that  $\tilde{f}-\tilde{f_1}^p+\tilde{f_1}$ is linear on some irreducible component of $Y$.
\end{enumerate}
Then we have
\begin{equation*}
\abs{S_{\CJ_1,\ldots,\CJ_m}}\ll D(D+d_{\tilde{g}}+d_{\tilde{f}})\sqrt{p}\log^m{p}.
\end{equation*}
\end{lemma}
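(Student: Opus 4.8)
The plan is to reduce the incomplete sum over the box $\CJ_1\times\cdots\times\CJ_m$ to a weighted average of complete sums over the whole of $\mathbb{F}_p^m$ (restricted to $Y$), and then bound each complete sum by the Weil–Deligne bounds for exponential sums on curves. First I would complete the sum in each coordinate using the standard Fourier expansion of the indicator function of an interval: write
\[
\mathbf{1}_{\CJ_j}(x_j)=\frac{1}{p}\sum_{t_j\in\mathbb{F}_p}\widehat{\mathbf{1}_{\CJ_j}}(t_j)\,e_p(t_j x_j),
\]
so that
\[
S_{\CJ_1,\ldots,\CJ_m}=\frac{1}{p^m}\sum_{\mathbf{t}\in\mathbb{F}_p^m}\Bigl(\prod_{j=1}^m\widehat{\mathbf{1}_{\CJ_j}}(t_j)\Bigr)\sum_{\mathbf{x}\in Y}\chi(\tilde g(\mathbf{x}))\,\psi(\tilde f(\mathbf{x}))\,e_p(t_1x_1+\cdots+t_mx_m).
\]
The well-known bound $\sum_{t_j}|\widehat{\mathbf{1}_{\CJ_j}}(t_j)|\ll p\log p$ then contributes the factor $\log^m p$, so it remains to bound each inner complete sum by $O\bigl(D(D+d_{\tilde g}+d_{\tilde f})\sqrt p\bigr)$ uniformly in $\mathbf{t}$.

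The inner sum is a hybrid character sum on the curve $Y$ with the additive character now applied to $\tilde f(\mathbf{x})+t_1x_1+\cdots+t_mx_m$. The key point is that adding the linear form $\ell_{\mathbf t}(\mathbf x)=\sum_j t_jx_j$ does not destroy the non-degeneracy hypothesis: under condition (2), $\tilde f+\ell_{\mathbf t}-\tilde f_1^p+\tilde f_1$ is linear on a component of $Y$ if and only if $\tilde f-\tilde f_1^p+\tilde f_1$ is (since $\ell_{\mathbf t}$ is itself linear), so the hypothesis is preserved for every $\mathbf t$; under condition (1) the multiplicative part is untouched. Hence, for each irreducible component, the relevant $L$-function is pure of the expected weight, and one invokes a Weil-type bound for hybrid exponential sums on curves — precisely, the bound of Perel'muter, or the cohomological estimates of Katz/Fu–Wan — in the form: if $Y'$ is an (absolutely irreducible) affine curve of degree $\le D$ and the non-degeneracy holds, then the complete hybrid sum over $Y'$ is $O\bigl((D+d_{\tilde g}+d_{\tilde f})\sqrt p\bigr)$. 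Summing over the at most $D$ absolutely irreducible components of $Y$ (those not defined over $\mathbb F_p$ contribute $O(D)$ points total, hence $O(D)$ which is absorbed) gives the factor $D(D+d_{\tilde g}+d_{\tilde f})$ in front.

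The main obstacle is the last step: making precise and citing (or, if necessary, assembling from the cohomological machinery) a clean Weil bound for \emph{hybrid} sums $\sum_{\mathbf x\in Y'}\chi(\tilde g)\psi(\tilde f)$ on a possibly singular, higher-dimensional-ambient space curve, with completely explicit dependence on the degree $D$ and on the pole degrees $d_{\tilde g},d_{\tilde f}$, and valid under exactly the stated "not a perfect $a$-th power / not linear modulo $\wp^p-\wp$" non-degeneracy conditions. One handles singularities by pulling back to the normalization $\widetilde{Y'}\to Y'$, a smooth projective curve whose genus is $O(D^2)$ by Plücker-type bounds, noting that $\chi(\tilde g)\psi(\tilde f)$ pulls back to the analogous datum with pole divisor of degree $O(D(D+d_{\tilde g}+d_{\tilde f}))$, so that the number of "bad" points (zeros/poles of $\tilde g$, poles of $\tilde f$, points at infinity) removed from the complete sum is $O(D(D+d_{\tilde g}+d_{\tilde f}))$; this is where the quadratic-in-$D$ shape of the bound originates. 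The non-degeneracy hypothesis guarantees the associated Artin–Schreier–Kummer sheaf has no trivial summand on any component, so Deligne's bound applies with the "number of zeros and poles" playing the role of the conductor, yielding $O\bigl((g(\widetilde{Y'})+\#\text{bad points})\sqrt p\bigr)=O\bigl(D(D+d_{\tilde g}+d_{\tilde f})\sqrt p\bigr)$ per component. Everything else is bookkeeping; the essential content is the uniform, degree-explicit Weil estimate and the verification that completing the sum preserves the hypothesis.
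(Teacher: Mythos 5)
Your proposal is correct and follows essentially the same route as the paper: the paper's proof consists of citing Perel'muter's theorem for the complete sum (the case when all $\CJ_j$ are full intervals) and then invoking a standard completion argument, referring to \cite[Lemma 3.1]{MaZa11} for the details. Your Fourier expansion of the interval indicators, the observation that adding the linear form $\sum_j t_jx_j$ preserves the non-degeneracy hypothesis, and the reduction to a degree-explicit Weil bound on each absolutely irreducible component are precisely the ingredients of that completion argument as carried out in the cited reference.
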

\begin{proof}
The case when all the $\CJ_j$'s are all full intervals is the main result in Perel'muter \cite{Per69}, which uses the idea of Bombeiri-Weil type estimate on exponential sums along a curve \cite{Wei48,Bom66}. The lemma then follows from a standard completion argument. See \cite[Lemma 3.1]{MaZa11} for details.
\end{proof}

Next, we will introduce a probability model and investigate its properties. Let $X_1,\ldots,X_H$ be independent random variables uniformly distributed on the unit circle, and let $Z_H=X_1+\ldots+X_H$. The next lemma is about the expected value $E((\text{Re}Z_H)^r(\text{Im}Z_H)^s)$.
\begin{lemma}\label{lem31}
Let $r,s\geq 0$ be integers. When $r+s$ is odd, we have
\begin{equation*}
E((\text{Re}Z_H)^r(\text{Im}Z_H)^s) = 0,
\end{equation*}
and when $r+s=2t$ is even, we have
\begin{equation*}
E((\text{Re}Z_H)^r(\text{Im}Z_H)^s) = t!(H^t+O(t^2 H^{t-1}))\sum_{\substack{0\leq j\leq r, 0\leq l\leq s \\ j+l=t}} \binom{r}{j}\binom{s}{l}(-1)^{s-l}.
\end{equation*}
\end{lemma}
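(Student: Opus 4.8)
The plan is to realise $Z_H$ explicitly and reduce everything to orthogonality of characters on the circle. Write $X_a=e^{i\theta_a}$ with $\theta_1,\dots,\theta_H$ independent and uniform on $[0,2\pi)$, so that $\text{Re}Z_H=\sum_{a=1}^{H}\cos\theta_a$ and $\text{Im}Z_H=\sum_{a=1}^{H}\sin\theta_a$. Expanding the two powers (write $[H]=\{1,\dots,H\}$),
\[
(\text{Re}Z_H)^r(\text{Im}Z_H)^s=\sum_{\mathbf a\in[H]^r}\ \sum_{\mathbf b\in[H]^s}\ \prod_{u=1}^{r}\cos\theta_{a_u}\ \prod_{v=1}^{s}\sin\theta_{b_v},
\]
and by independence the expectation of each summand factors over the coordinates as $\prod_{w=1}^{H}E\bigl(\cos^{m_w}\theta\,\sin^{n_w}\theta\bigr)$, where $m_w=\#\{u:a_u=w\}$ and $n_w=\#\{v:b_v=w\}$. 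So the whole moment is governed by the elementary integrals $\frac1{2\pi}\int_0^{2\pi}\cos^{m}\theta\sin^{n}\theta\,d\theta$.

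For the odd case, the substitution $\theta\mapsto\theta+\pi$ shows $\frac1{2\pi}\int_0^{2\pi}\cos^{m}\theta\sin^{n}\theta\,d\theta=0$ whenever $m+n$ is odd. If $r+s$ is odd, then since $\sum_w(m_w+n_w)=r+s$ there is, for every choice of $\mathbf a,\mathbf b$, a coordinate $w$ with $m_w+n_w$ odd; that factor vanishes, hence so does every summand and therefore the full expectation.

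For the even case $r+s=2t$ it is cleaner to revert to $Z_H$ and $\overline{Z_H}$. From $\text{Re}Z_H=\frac12(Z_H+\overline{Z_H})$, $\text{Im}Z_H=\frac1{2i}(Z_H-\overline{Z_H})$ and the binomial theorem,
\[
(\text{Re}Z_H)^r(\text{Im}Z_H)^s=\frac1{2^{2t}\,i^{s}}\sum_{j=0}^{r}\sum_{l=0}^{s}\binom rj\binom sl(-1)^{s-l}\,Z_H^{j+l}\,\overline{Z_H}^{2t-j-l}.
\]
Now $Z_H^{p}\overline{Z_H}^{q}=\sum_{\mathbf u\in[H]^p,\ \mathbf v\in[H]^q}e^{i(\theta_{u_1}+\cdots+\theta_{u_p}-\theta_{v_1}-\cdots-\theta_{v_q})}$, and by independence $E$ of a single exponential $e^{i\sum\pm\theta}$ equals $1$ when every element of $[H]$ occurs with the $+$ sign exactly as often as with the $-$ sign and equals $0$ otherwise; in particular $E\bigl(Z_H^{p}\overline{Z_H}^{q}\bigr)=0$ unless $p=q$. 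Hence only the terms with $j+l=2t-j-l=t$ survive, and collecting them expresses $E\bigl((\text{Re}Z_H)^r(\text{Im}Z_H)^s\bigr)$ as $2^{-2t}i^{-s}\,E\bigl(Z_H^{t}\overline{Z_H}^{t}\bigr)$ times the sum $\sum_{j+l=t,\ 0\le j\le r,\ 0\le l\le s}\binom rj\binom sl(-1)^{s-l}$ appearing in the statement. (When $s$ is odd, $r$ is odd too, and this sum is the middle coefficient of $(1+x)^r(x-1)^s$, which vanishes by the symmetry $x\mapsto 1/x$; when $s$ is even, $i^{-s}=(-1)^{s/2}$ is real.)

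It remains to evaluate $E\bigl(Z_H^{t}\overline{Z_H}^{t}\bigr)$, which by the balancing criterion just used is the number of pairs $(\mathbf u,\mathbf v)\in[H]^t\times[H]^t$ with $\{u_1,\dots,u_t\}=\{v_1,\dots,v_t\}$ as multisets. I would group these pairs by their common multiset $M$: if $M$ has $k$ distinct entries with multiplicities $c_1,\dots,c_k$, the number of pairs with underlying multiset $M$ is $\bigl(t!/(c_1!\cdots c_k!)\bigr)^2$. The multisets with $k=t$ (all multiplicities $1$) contribute $\binom Ht(t!)^2=t!\,H(H-1)\cdots(H-t+1)=t!\bigl(H^t+O(t^2H^{t-1})\bigr)$, the main term, while the contribution of those with $k\le t-1$ is $O\bigl(t^2\,t!\,H^{t-1}\bigr)$: although a degenerate multiset can have up to $t!$ orderings, there are only $O\bigl(\binom H{t-1}\bigr)=O\bigl(H^{t-1}/(t-1)!\bigr)$ of them, which absorbs one factor of $t!$. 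Putting the three pieces together gives the asserted formula. This last step is where I expect the real work to lie: obtaining the error term in $E\bigl(Z_H^{t}\overline{Z_H}^{t}\bigr)$ with the correct explicit dependence on $t$ (uniformly in $H$) requires weighing the possibly large number of orderings of a given multiplicity pattern against the correspondingly small number of multisets realising it, and checking that the full sum of the $k\le t-1$ contributions — not merely the dominant $k=t-1$ term — stays $O(t^2\,t!\,H^{t-1})$, since a naive count of degenerate configurations overshoots by a spurious factor $t!$.
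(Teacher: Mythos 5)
Your proof is correct and follows essentially the same route as the paper: binomial expansion in $Z_H$ and $\overline{Z_H}$, orthogonality on the circle to kill the terms with unequal exponents, and a count of pairs of tuples sharing a multiset to get $E(Z_H^t\overline{Z_H}^t)=t!\bigl(H^t+O(t^2H^{t-1})\bigr)$, with exactly the worry you flag about the degenerate multisets resolved by bounding the number of $\mathbf{u}$ with a repeated entry by $\binom{t}{2}H^{t-1}$ and the number of reorderings $\mathbf{v}$ of each such $\mathbf{u}$ by $t!$. One thing worth noting: your computation (and, in fact, the displayed line in the paper's own proof) carries the normalising constant $2^{-(r+s)}i^{-s}$, which does not appear in the printed statement of the lemma; the later applications of the lemma are consistent with that factor being present, so the statement appears to have a typographical omission rather than you having proved the wrong thing.
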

\begin{proof}
Consider
\begin{align*}
E((\text{Re}Z_H)^r(\text{Im}Z_H)^s) &= E\left( \left(\frac{Z_H+\overline{Z_H}}{2}\right)^r \left(\frac{Z_H-\overline{Z_H}}{2i}\right)^s \right) \\
&= E\left( \sum_{j=0}^r\frac{1}{2^r}\binom{r}{j}Z_H^j\overline{Z_H}^{r-j} \sum_{l=0}^s\frac{1}{(2i)^s}\binom{s}{l}Z_H^l\overline{Z_H}^{s-l} \right) \\
&= \sum_{j=0}^r\sum_{l=0}^s\frac{1}{2^r(2i)^s}\binom{r}{j}\binom{s}{l}(-1)^{s-l}E(Z_H^{j+l}\overline{Z_H}^{r+s-(j-l)}).
\end{align*}
Note that
\begin{align*}
E(Z_H^a\overline{Z_H}^b) &= E\left( \left(\sum_{j=1}^H X_j\right)^a \left(\sum_{l=1}^H \overline{X_l}\right)^b \right) \\
&= \sum_{1\leq j_1,\ldots,j_a\leq H}\sum_{1\leq l_1,\ldots,l_b\leq H}E(X_{j_1}\ldots X_{j_a}\overline{X_{l_1}\ldots X_{l_b}}).
\end{align*}
Since $E(X_i)=E(\overline{X_i})=0$ and the $X_i$'s are independent, the above expectation is zero unless $a=b$ and $j_1,\ldots,j_a$ is a permutation of $l_1,\ldots,l_b$. The lemma now follows from noting that the number of combinations $1\leq j_1,\ldots, j_a,l_1,\ldots,l_a \leq H$ such that $j_1,\ldots,j_a$ being a permutation of $l_1,\ldots,l_a$ is 
\begin{equation*}
a!(H^a+O(a^2 H^{a-1})).
\end{equation*}
\end{proof}

We will also need the normalized version of the above model. Let
\begin{equation*}
\tilde{Z_H} = \frac{Z_H}{\sqrt{H/2}}.
\end{equation*}
Clearly we have
\begin{equation}\label{eqnlem32}
E((\text{Re}\tilde{Z_H})^r(\text{Im}\tilde{Z_H})^s) = \frac{1}{(H/2)^{(r+s)}/2} E((\text{Re}Z_H)^r(\text{Im}Z_H)^s)
\end{equation}
The following lemma, taken from \cite[Lemma 3.2]{Lam13}, shows that the joint distribution of $\text{Re}\tilde{Z_H}$ and $\text{Im}\tilde{Z_H}$ is close to the characteristic function of a two-dimensional standard Gaussian distribution.
\begin{lemma}\label{lem32}
Let $u,v$ be real numbers such that $\abs{u},\abs{v}\leq H^{1/4}$. Then
\begin{equation*}
E(e^{iu\text{Re}\tilde{Z_H}+iv\text{Im}\tilde{Z_H}}) = e^{-\frac{u^2+v^2}{2}}\left( 1+O\left(\frac{u^4+v^4}{H}\right) \right).
\end{equation*}
\end{lemma}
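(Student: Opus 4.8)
The plan is to reduce everything to a single exponential-sum estimate via the Taylor expansion of $e^{z}$. First I would write the characteristic function $E(e^{iu\text{Re}\tilde{Z_H}+iv\text{Im}\tilde{Z_H}})$ as a power series in $(u,v)$ whose coefficients are the mixed moments $E((\text{Re}\tilde{Z_H})^r(\text{Im}\tilde{Z_H})^s)$, and then compare term by term with the power series of $e^{-(u^2+v^2)/2}$, whose coefficients are the Gaussian moments. By Lemma \ref{lem31} (together with the normalization \eqref{eqnlem32}) the moment $E((\text{Re}\tilde{Z_H})^r(\text{Im}\tilde{Z_H})^s)$ is $0$ when $r+s$ is odd, and when $r+s=2t$ it equals $2^t t! (1+O(t^2/H))\sum_{j+l=t}\binom{r}{j}\binom{s}{l}(-1)^{s-l}$ divided by the appropriate power of $H$; the $H$-independent part of this is exactly the Gaussian moment $E(G_1^r G_2^s)$ for an independent standard Gaussian pair $(G_1,G_2)$, which one checks via the standard one-line computation (e.g. using that $\text{Re}$ and $\text{Im}$ of a complex Gaussian are independent standard real Gaussians). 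So the $m$-th order coefficient of the difference of the two characteristic functions is a single sum of $O(1)$-many terms, each of size $O(t^2/H)$ times a Gaussian moment.

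The key step is then to control the remainder. I would fix $|u|,|v|\le H^{1/4}$ and split the series $\sum_{r,s} \frac{(iu)^r(iv)^s}{r!s!} E((\text{Re}\tilde{Z_H})^r(\text{Im}\tilde{Z_H})^s)$ at $r+s = M$ for a suitable cutoff $M$ (something like $M\asymp H^{1/2}$ or $M\asymp \sqrt{H}$, to be optimized). For $r+s \le M$ one uses the term-by-term comparison above: since $t = (r+s)/2 \le M/2$, each error term is $O(M^2/H)$ times a Gaussian moment, and summing $\sum_{r+s\le M}\frac{|u|^r|v|^s}{r!s!}|E(G_1^rG_2^s)| \cdot O(M^2/H)$ against the rapidly converging Gaussian series gives a total contribution of $O((u^4+v^4)/H)\cdot e^{(u^2+v^2)/2}$-type size — here I would be a little careful to pull out the factor $(u^4+v^4)$ correctly, exploiting that for the $m=2$ and odd-$m$ terms the difference vanishes identically, so the first nonzero error appears at order $m=4$, which is what produces the clean $O((u^4+v^4)/H)$ in the statement rather than merely $O((u^2+v^2)/H)$. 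For the tail $r+s > M$ one bounds $E((\text{Re}\tilde{Z_H})^r(\text{Im}\tilde{Z_H})^s)$ crudely by, say, $2^{(r+s)/2}(r+s)^{(r+s)/2}$ (from Lemma \ref{lem31}, or from the trivial bound $|\tilde Z_H|\le \sqrt{2H}$ giving $(2H)^{(r+s)/2}/(H/2)^{(r+s)/2}=2^{r+s}$, combined with the subgaussian decay of the honest moments), and checks that $\sum_{r+s>M}\frac{|u|^r|v|^s}{r!s!}\cdot(\text{such a bound})$ is super-polynomially small in $H$ when $|u|,|v|\le H^{1/4}$ and $M$ is chosen appropriately; the analogous Gaussian tail is handled the same way.

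The main obstacle I expect is the bookkeeping in the tail estimate: one needs a moment bound for $\tilde Z_H$ strong enough that the tail of the Taylor series is negligible even after multiplying by $|u|^r|v|^s$ with $|u|,|v|$ as large as $H^{1/4}$, and Lemma \ref{lem31}'s error term $O(t^2 H^{t-1})$ is only a good approximation when $t = o(\sqrt H)$, so the cutoff $M$ must be chosen inside that regime while still being large enough that $\frac{(H^{1/4})^M}{M!}$-type tails decay. Balancing these — roughly $M$ of order $H^{1/3}$ or $H^{1/2-\delta}$ — is the delicate point; everything else is the routine comparison of two convergent power series. An alternative, possibly cleaner, route that I would keep in reserve is to avoid the series split entirely: write $\tilde Z_H = \frac{1}{\sqrt{H/2}}\sum_{j=1}^H X_j$, so $E(e^{iu\text{Re}\tilde Z_H + iv\text{Im}\tilde Z_H}) = \prod_{j=1}^H E\big(e^{i(u\cos\Theta_j + v\sin\Theta_j)/\sqrt{H/2}}\big)$ with $\Theta_j$ uniform on $[0,2\pi)$, compute $E(e^{i(u\cos\Theta + v\sin\Theta)/\sqrt{H/2}}) = J_0(\sqrt{u^2+v^2}/\sqrt{H/2})$ (a Bessel function), and then use $J_0(z) = 1 - z^2/4 + O(z^4)$ together with $(1 - w/H + O(w^2/H^2))^H = e^{-w}(1+O(w^2/H))$; this gives the stated asymptotic directly, with the hypothesis $|u|,|v|\le H^{1/4}$ exactly ensuring $z^4 \asymp (u^2+v^2)^2/H^2$ is the right size for the error and that the higher Bessel terms are controlled. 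Either way the statement falls out; I would present the Bessel-product version if it shortens the write-up, and otherwise the moment-comparison version since it reuses Lemma \ref{lem31} verbatim.
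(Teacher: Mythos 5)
The paper does not actually prove Lemma \ref{lem32}: it simply quotes it verbatim from Lamzouri \cite[Lemma 3.2]{Lam13}, so there is no ``in-paper proof'' to compare against. That said, your Bessel-product approach is a complete, correct proof of the statement (and I believe essentially what Lamzouri does). The computation is exactly as you outline: by independence
$E(e^{iu\text{Re}\tilde{Z}_H+iv\text{Im}\tilde{Z}_H}) = J_0\bigl(\sqrt{2(u^2+v^2)/H}\bigr)^H$, and since $|u|,|v|\le H^{1/4}$ guarantees the argument $z=\sqrt{2(u^2+v^2)/H}$ satisfies $z^2\le 4/\sqrt{H}\to 0$, one can write $\log J_0(z)=-z^2/4+O(z^4)$, hence $H\log J_0(z)=-(u^2+v^2)/2+O((u^2+v^2)^2/H)$, and $(u^2+v^2)^2/H\le 4$ lets you exponentiate to get $e^{-(u^2+v^2)/2}\bigl(1+O((u^4+v^4)/H)\bigr)$. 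All errors live inside the exponent, which is precisely why the prefactor $e^{-(u^2+v^2)/2}$ comes out attached to the error.

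Your first (moment-comparison) route, however, contains a genuine gap which you should take seriously rather than leave to bookkeeping: the term-by-term errors from Lemma \ref{lem31}, of relative size $O(t^2/H)$, get multiplied against the Gaussian moments $E(G_1^rG_2^s)$, and $\sum_{r,s}\frac{|u|^r|v|^s}{r!s!}\,|E(G_1^rG_2^s)| = e^{(u^2+v^2)/2}$. Thus the additive error you obtain is of size $(u^4+v^4)/H\cdot e^{+(u^2+v^2)/2}$ — as you yourself wrote — whereas the statement requires an additive error of size $(u^4+v^4)/H\cdot e^{-(u^2+v^2)/2}$. For $|u|,|v|$ near $H^{1/4}$ these differ by a factor of up to $e^{2\sqrt{H}}$, so no amount of careful cutoff-choosing for $M$ closes this; the mismatch is structural. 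To rescue a moment-based argument one would need to exploit sign cancellations among the $(iu)^r(iv)^s$-weighted error terms, but Lemma \ref{lem31} only gives an unsigned $O(t^2H^{t-1})$, so there is nothing to cancel against. This is precisely why the multiplicative (Bessel / characteristic-function product) route is not merely cleaner but essentially forced if you want the error in the form stated; present that version.
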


\section{The moments $M(r,s)$ and $M(k)$}

In this section we will define the moments and start our calculation of these moments. For any non-negative integers $r, s$, we define
\begin{equation*}
M(r,s):=\sum_{x\in\CI}(\text{Re}S(x))^r(\text{Im}S(x))^s,
\end{equation*}
and
\begin{equation*}
M(k):=M(k,0)=\sum_{x\in\CI}S(x)^k.
\end{equation*}
Note that if $S(x)$ is real, then $M(r,s)=0$ when $s>0$. Recall that $D=\deg{C}$. For any rational function $f\in\mathbb{F}_p(x)$, we define $d_g$, $d_f$ to be the degree of the denominator of $g$, $f$ respectively. Set $d=D+d_g+d_f$. Our estimates for the moments are the following.

\begin{theorem}\label{thm41}
Let $S(x)$ be complex and satisfies the assumptions in Theorem \ref{thm1}. Then we have
\begin{multline*}
M(r,s) = \frac{t!H^t\abs{\CI}(\beta-\alpha)^t}{2^{r+s}i^s}\sum_{\substack{0\leq j\leq r, 0\leq l\leq s \\ j+l=t}} \binom{r}{j}\binom{s}{l}(-1)^{s-l} \\
+O((t!)^2H^{t-1}d^{r+s}\abs{\CI} + 2^t t!d^{2(r+s)}H^{r+s}\sqrt{p}\log^{r+s+1}p)
\end{multline*}
when $r+s=2t$ is even, and when $r+s$ is odd, we have
\begin{equation*}
M(r,s) = O(d^{2(r+s)}H^{r+s}\sqrt{p}\log^{r+s+1}p)
\end{equation*}
if $\psi$ is nontrivial, and
\begin{equation*}
M(r,s) = O(\left(\frac{r+s-1}{2}\right)!d^{r+s-1}H^{(\frac{r+s-1}{2})}\abs{\CI} + d^{2(r+s)}H^{r+s}\sqrt{p}\log^{r+s+1}{p})
\end{equation*}
if $\psi$ is trivial.
\end{theorem}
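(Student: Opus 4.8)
The plan is to expand $S(x)^k$ (and more generally $(\mathrm{Re}\,S(x))^r(\mathrm{Im}\,S(x))^s$) combinatorially into sums over tuples of points on $C$, interchange the order of summation so that $x$ runs over $\CI$ on the inside, and then split the resulting sum according to whether the tuple of points is ``diagonal'' (so that the inner sum over $x$ contributes a main term of size $\approx\abs{\CI}$) or ``off-diagonal'' (so that the inner character/additive-character sum over $x$ and over the points is bounded by Lemma \ref{lem:perel}). Concretely, write $S(x)=\sum_{P}\epsilon_P(x)\chi(g(P))\psi(f(P))$ where $\epsilon_P(x)$ is the indicator that $P\in C$ lies in the box $(x,x+H]\times\CJ$; then $(\mathrm{Re}\,S)^r(\mathrm{Im}\,S)^s$ becomes, after the same binomial manipulation as in Lemma \ref{lem31}, a linear combination over $j,l$ of sums of the shape $\sum_{x\in\CI}\sum_{P_1,\dots,P_{r+s}}\big(\prod_i\epsilon_{P_i}(x)\big)\chi\big(\prod g(P_i)^{\pm1}\big)\psi\big(\sum \pm f(P_i)\big)$. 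The $\epsilon_{P_i}(x)$ force all the $x(P_i)$ to lie in a common window of length $H$; as in \cite{MaZa11} one detects this by completing in $x$, at the cost of a $\log p$ per interval, reducing to complete sums over the product variety $C^{r+s}$ cut out by the congruence conditions.

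Next I would separate the diagonal. The diagonal contribution comes from tuples where the points pair up so that the multiplicative argument $\prod g(P_i)^{\pm1}$ is an $a$-th power on the relevant component and the additive argument $\sum\pm f(P_i)$ is linear — i.e. exactly the configurations where Lemma \ref{lem:perel} does \emph{not} apply. For generic curves $C$, the key point is that such a tuple must (up to the $a$-th power/linearity ambiguity, which the hypotheses of Theorem \ref{thm1} rule out for a single unpaired factor) have the $P_i$ split into matched pairs $P=P'$, contributing the combinatorial count $t!(H^t+O(t^2H^{t-1}))$ — the same count as in Lemma \ref{lem31} — times $\abs{\CI}$, with the density $(\beta-\alpha)$ entering because each pair is constrained to have $y$-coordinate in $\CJ$, which by equidistribution of points on $C$ in boxes contributes a factor $(\beta-\alpha)H$ per pair rather than $H$. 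Carrying the $\binom{r}{j}\binom{s}{l}(-1)^{s-l}/(2^{r+s}i^s)$ factors through gives the stated main term $\frac{t!H^t\abs{\CI}(\beta-\alpha)^t}{2^{r+s}i^s}\sum\binom{r}{j}\binom{s}{l}(-1)^{s-l}$, and the error from the $O(t^2H^{t-1})$ count and from the equidistribution error in counting points in boxes is absorbed into the $O((t!)^2H^{t-1}d^{r+s}\abs{\CI})$ term. When $r+s$ is odd no such pairing is possible (parity), so there is no main term unless $\psi$ is trivial; if $\psi$ is trivial the additive part disappears and a smaller ``near-diagonal'' main term of size $\big(\tfrac{r+s-1}{2}\big)! d^{r+s-1}H^{(r+s-1)/2}\abs{\CI}$ survives, coming from tuples with one leftover point whose $g$-value is a unit $a$-th power.

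The off-diagonal terms are handled uniformly by Lemma \ref{lem:perel}: for each fixed choice of the completion frequencies and each non-diagonal pairing pattern, the inner sum is a hybrid exponential sum over a space curve $Y\subseteq C^{r+s}$ (or rather over its components), of degree $O(d^{r+s})$ with denominator degrees $O(d)$, and the hypotheses of Theorem \ref{thm1} guarantee — here is where the algebraic geometry of Section \ref{sec5} is invoked — that neither the multiplicative nor the additive argument degenerates on any component of $Y$, so Lemma \ref{lem:perel} yields $\ll d^{2(r+s)}\sqrt{p}\log^{r+s}p$; summing over the $O(t!\,2^t)$ pairing patterns and the $O(H^{r+s})$ worth of completion weights (more precisely $O(\log^{r+s+1}p)$ after the completion bookkeeping, with the $H^{r+s}$ coming from the trivial bound on the number of points per box in the worst strata) produces the claimed $O(2^t t!\,d^{2(r+s)}H^{r+s}\sqrt{p}\log^{r+s+1}p)$. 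The main obstacle is the second step: showing that the only tuples for which Lemma \ref{lem:perel} fails are the (near-)diagonal ones, and correctly enumerating them with the right combinatorial constant, because on a general curve $C$ the product variety $C^{r+s}\cap\{\text{congruences}\}$ can be reducible in subtle ways and one must rule out ``sporadic'' components on which $\prod g(P_i)^{\pm1}$ happens to be an $a$-th power or $\sum\pm f(P_i)$ happens to be linear without the points actually pairing up — this is exactly what the curve-family analysis of Section \ref{sec5} is designed to control, and it is why the nonlinearity/non-power hypotheses on $f$ and $g$ in Theorem \ref{thm1} are needed.
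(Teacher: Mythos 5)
Your proposal is essentially correct and follows the same strategy the paper uses: expand $(\mathrm{Re}\,S)^r(\mathrm{Im}\,S)^s$ binomially into the sums $S(j_1,j_2)$, parameterize tuples of points by their shift vector $\mathbf{h}$ so the inner sum becomes a hybrid exponential sum over the one-dimensional auxiliary curve $C^o_{\mathbf{t}(\CY)}$, bound the off-diagonal by the Perel'muter estimate (Lemma \ref{lem:perel}), and count the diagonal tuples using the combinatorics of Lemma \ref{lem31} together with equidistribution of points on the irreducible $C^o_{\mathbf{h}}$ in the box $\CI\times\CJ^t$ (Proposition \ref{prop51}, Corollary \ref{cor52} controlling the reducible exceptions, and Proposition \ref{prop53} ruling out sporadic degenerate components). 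One small point of imprecision worth flagging: the phrase ``contributes a factor $(\beta-\alpha)H$ per pair'' reads as a per-$x$ count, which would fail because the pointwise error $O(\sqrt{p}\log^2 p)$ dominates when $H$ is small; the correct logic is exactly the rearrangement you invoke a sentence earlier — sum over the shift tuple $\mathbf{h}$ first, then count points on the curve $C^o_{\mathbf{h}}$ in the large box of $x$-length $\abs{\CI}>p^{1/2+\epsilon}$, where the Weil-type error is acceptable — so the main term is $(t!H^t)\cdot(\abs{\CI}(\beta-\alpha)^t)$ with the two factors arising separately. Similarly, the near-diagonal main term when $r+s$ is odd and $\psi$ trivial comes not from ``one leftover point whose $g$-value is a unit $a$-th power'' but from $|m|$ clusters of $a$ matching factors when $j_1-j_2=ma$, $m\neq 0$; your stated order of magnitude is nonetheless correct.
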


\begin{theorem}\label{thm42}
Let $S(x)$ be real and satisfies the assumptions in Theorem \ref{thm2}. Then we have
\begin{equation*}
M(k)=O(H^{k}d^{2k}\sqrt{p}\log^{k+1}{p}).
\end{equation*}
when $k$ is odd, and
\begin{equation*}
\frac{k!}{2^{\frac{k}{2}}\left(\frac{k}{2}\right)!}\abs{\CI}H^{\frac{k}{2}}(\beta-\alpha)^{\frac{k}{2}} +O((k/2)!d^{k}H^{k/2-1}\abs{\CI} + d^{2k}H^{k}\sqrt{p}\log^{k+1}{p})
\end{equation*}
when $k$ is even.
\end{theorem}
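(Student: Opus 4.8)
The plan is to expand $S(x)^k=\big(\sum_{P}\chi(g(P))\psi(f(P))\big)^k$ and interchange the order of summation, writing
\[
M(k)=\sum_{\substack{P_1,\dots,P_k\in C\\ y(P_i)\in\CJ}}\ \chi\!\Big(\prod_i g(P_i)\Big)\psi\!\Big(\sum_i f(P_i)\Big)\ \#\{x\in\CI: x<x(P_i)\le x+H\ \forall i\}.
\]
Since $\chi$ is the quadratic character and $\psi$ is trivial in the real case, $\chi(\prod_i g(P_i))=\chi(\prod_i g(P_i))$ is just the Legendre symbol of the product, and the summand depends only on the $k$-tuple of points. The inner count over $x\in\CI$ is, up to an error of $O(H)$ coming from boundary effects at the ends of $\CI$, equal to $\max(0,H-(\max_i x(P_i)-\min_i x(P_i)))$ when all the $x(P_i)$ lie within a window of length $H$, and $0$ otherwise. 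I would record this as $w(P_1,\dots,P_k)$ and split $M(k)$ into the "diagonal-ish" main term where the window constraint forces the $x$-coordinates to nearly coincide, and an off-diagonal remainder.

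Next I would dyadically (or combinatorially) organize the $k$-tuples according to the partition structure of which $P_i$ coincide or have coinciding $x$-coordinates. The key point is: when the $k$ points are spread out, the product $\prod_i g(P_i)$ is a nonconstant function on the product curve $C^k$ (intersected with the "all $x$'s within $H$" locus, which is itself a union of lower-dimensional pieces cut out by $x(P_i)-x(P_j)=c$), and one can invoke Lemma \ref{lem:perel} on the appropriate space curve $Y\subseteq C^k$ to get square-root cancellation $O(d^{2k}\sqrt p\log^{k+1}p)$, after completing the $H$-window sum into full intervals at the cost of the extra $H^k\log$ factors. This requires checking the hypothesis of Lemma \ref{lem:perel}(1): that $\prod_i g(P_i)$ is not a perfect square (times a power of the defining equation) on any component of the relevant curve — this is where the hypothesis on $g$ in Theorem \ref{thm2} (that $g$ is not of the form $h_1^2+h_2 P^b$) is used, together with an analysis (via Section \ref{sec5}) of how the irreducible components of $Y$ sit and of the function field generated by the $g(P_i)$. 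The surviving main term comes exactly from the locus where the points pair up so that $g(P_i)g(P_j)$ becomes a square automatically (i.e.\ $P_i=P_j$, giving $g(P_i)^2$ which is a square), which forces $k$ even and produces a perfect matching of the $k$ indices.

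For the even case, after isolating the "fully matched" tuples $P_1=P_2$, $P_3=P_4$, \dots (in all $\tfrac{k!}{2^{k/2}(k/2)!}$ matchings), the summand becomes $\chi(\prod_{\text{pairs}} g(P)^2)=1$ (away from zeros of $g$, which contribute $O(1)$ to the point count and are absorbed), and the contribution is $\tfrac{k!}{2^{k/2}(k/2)!}$ times $\sum_{P: y(P)\in\CJ} w(P,P,\dots)$ over a single point $P$ repeated, i.e.\ roughly $\tfrac{k!}{2^{k/2}(k/2)!}\,H^{k/2}$ times the number of relevant points, which by the point-count estimate $N=(\beta-\alpha)H+O(\sqrt p\log^2 p)$ and an averaging over $x\in\CI$ gives $\tfrac{k!}{2^{k/2}(k/2)!}|\CI|H^{k/2}(\beta-\alpha)^{k/2}$. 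The partially-matched configurations (some but not all indices paired, or triples, or distinct points with equal $x$-coordinate) are either killed by Lemma \ref{lem:perel} (when the residual $g$-product is nonsquare on every component) or, when they are not, are lower order: they involve fewer than $k/2$ free points and hence contribute at most $O((k/2)!\,d^k H^{k/2-1}|\CI|)$, matching the stated error term. Summing the square-root-cancellation bounds over the $O_k(1)$ configuration types gives the $O(d^{2k}H^k\sqrt p\log^{k+1}p)$ term; combining with the boundary $O(H)$-type errors (absorbed into the above since $|\CI|\gg p^{1/2}$) yields Theorem \ref{thm42}.

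The main obstacle I expect is the algebro-geometric bookkeeping in the middle step: for each way the $x$-coordinates can collide one obtains a space curve $Y\subseteq C^k$ (or rather $Y\subseteq C^{k'}$ after eliminating the duplicated points) cut out by equations $x(P_i)=x(P_j)$, and one must show that the relevant $g$-product is genuinely nonsquare on \emph{every} irreducible component of $Y$ — including components not defined over $\mathbb{F}_p$ and components arising from the geometry of $C$ itself (e.g.\ if $C$ has points sharing $x$-coordinates, which is now allowed). This is precisely the difficulty that the restrictive hypothesis in \cite{MaZa11} avoided, and handling it is what Section \ref{sec5} is for; the argument there presumably studies the family of fibers $x(P)=\text{const}$ as a fibration and controls the monodromy / the field of constants of these component curves, so that the hypothesis "$g$ is not $h_1^2 + h_2P^b$ on $C$" propagates to a nonsquareness statement on each component of $Y$. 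Once that input is in hand, the rest is the routine completion-of-sums and combinatorial summation sketched above.
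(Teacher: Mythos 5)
Your overall strategy is essentially the paper's: expand $M(k)$ into a sum over $k$-tuples of points, reparametrize via the shift vector $\mathbf{h}$ (equivalently, slice $C^k$ into fibers of the difference map $x(P_i)-x(P_1)$, which are exactly the auxiliary curves $C_{\mathbf{h}}$ of Section \ref{sec5}), estimate the off-diagonal configurations by the Perel'muter bound (Lemma \ref{lem:perel}), and extract the main term from perfect matchings. You also correctly identify the nonsquareness-on-every-component issue as the crux. So the route is the same, but there is one place where your sketch, read literally, would not close.

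For the matched configurations you propose to evaluate $\sum_{x\in\CI}N_{\CB_x}(C)^{k/2}$ by substituting the per-window point count $N_{\CB_x}(C)=(\beta-\alpha)H+O(\sqrt p\log^2 p)$ and expanding. Under the hypotheses $\log H = o(\log p)$ (so $H=p^{o(1)}$), the error $O(\sqrt p\log^2 p)$ \emph{dominates} the main term $(\beta-\alpha)H$ in that point count, so raising it to the power $k/2$ does not produce the claimed asymptotic; in fact $\sum_x N_{\CB_x}^{k/2}$ is precisely the kind of nontrivial moment the paper is organizing the whole argument to compute. The paper gets around this by never taking a power of the per-window count: for each fixed shift tuple $\mathbf{h}\in(0,H]^{k/2}$ it counts points directly on the auxiliary curve $C_{\mathbf{h}}$ (an affine curve in $\mathbb{A}^{k/2+1}$) inside the box $\CI\times\CJ^{k/2}$; by Proposition \ref{prop51} this curve is absolutely irreducible for all but $O(H^{k/2-1})$ of the $\mathbf{h}$, so the equidistribution of $\mathbb{F}_p$-points on an absolutely irreducible curve gives the sharp count $\abs{\CI}(\beta-\alpha)^{k/2}+O(d^{k}\sqrt p\log^{k/2+1}p)$ per curve, and the $H^{k/2}$ factor in the main term arises from summing over the diagonal $\mathbf{h}$'s, not from the window length. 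You will need this irreducibility input both here (to get the main term) and, in a weaker form, to verify the nonsquareness hypothesis of Lemma \ref{lem:perel} component-by-component on the off-diagonal curves; the paper does the latter by an elementary distinct-indeterminates argument in Section \ref{sec6}, rather than the monodromy/field-of-constants analysis you conjectured, though the underlying geometric content (via the Zariski connectedness argument in Proposition \ref{prop51}) is in the spirit you describe.
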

Note that these results on the moments are slightly different from those in \cite{MaZa11,Lam13} since we did not normalize the moments here. To start the computation of $M(r,s)$, we have
\begin{align}
M(r,s) &= \sum_{x\in\CI}(\text{Re}S(x))^r(\text{Im}S(x))^s \nonumber \\
&= \sum_{x\in\CI}\left( \frac{S(x)+\overline{S(x)}}{2} \right)^r \left( \frac{S(x)-\overline{S(x)}}{2i} \right)^s \nonumber \\
&= \frac{1}{2^{r+s}i^s}\sum_{x\in\CI}\sum_{j=0}^r \binom{r}{j}S(x)^j\overline{S(x)}^{r-j} \sum_{l=0}^s (-1)^{s-l}\binom{s}{l}S(x)^l\overline{S(x)}^{s-l} \nonumber \\
&= \frac{1}{2^{r+s}i^s}\sum_{j=0}^r\sum_{l=0}^s \binom{r}{j}\binom{s}{l}(-1)^{s-l}S(j+l,r+s-(j+l)), \label{eqnMrs1}
\end{align}
where
\begin{equation*}
S(j_1,j_2)=\sum_{x\in\CI}S(x)^{j_1}\overline{S(x)}^{j_2}.
\end{equation*}
In Section \ref{sec6}, we will prove the following about $S(j_1,j_2)$.
\begin{lemma}\label{lemSj12}
Let $S(x)$ be complex and satisfies the assumptions in Theorem \ref{thm1}. If $\psi$ is non-trivial, we have
\begin{equation}\label{eqnlemSa}
S(j,j) = j!H^j\abs{\CI}(\beta-\alpha)^j+O((j!)^2H^{j-1}d^{2j}\abs{\CI} + 2^j j!H^{2j}d^{4j}\sqrt{p}\log^{2j+1}{p})
\end{equation}
when $j_1=j_2=j$, and
\begin{equation}\label{eqnlemSb}
S(j_1,j_2) = O(d^{2(j_1+j_2)}H^{j_1+j_2}\sqrt{p}\log^{j_1+j_2+1}p).
\end{equation}
when $j_1\neq j_2$.

If $\psi$ is trivial, let $a>2$ be the order of $\chi$, then
\begin{equation}\label{eqnlemSc}
S(j,j) = j!H^j\abs{\CI}(\beta-\alpha)^j+O(j!d^{2j}H^{j-1}\abs{\CI} + d^{4j}H^{2j}\sqrt{p}\log^{j+1}{p})
\end{equation}
when $j_1=j_2=j$, 
\begin{multline}\label{eqnlemSd}
S(j_1,j_2) = \\
O(\left[\frac{j_1+j_2-1}{2}\right]!d^{j_1+j_2-1}H^{[\frac{j_1+j_2-1}{2}]}\abs{\CI} + d^{2(j_1+j_2)}H^{j_1+j_2}\sqrt{p}\log^{j_1+j_2+1}{p})
\end{multline}
when $j_1-j_2$ is a multiple of $a$, and
\begin{equation}\label{eqnlemSe}
S(j_1,j_2) = O(H^{j_1+j_2}d^{2(j_1+j_2)}\sqrt{p}\log^{j_1+j_2+1}p)
\end{equation}
otherwise.
\end{lemma}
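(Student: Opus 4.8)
The plan is to expand the powers of $S(x)$, interchange the order of summation, and recognise each resulting inner sum over $x\in\CI$ as an incomplete hybrid exponential sum along an auxiliary curve, to which Lemma~\ref{lem:perel} will apply once a non-degeneracy condition has been checked. Write $S(x)=\sum_{n\in(x,x+H]}T(n)$ with $T(n)=\sum_{P\in C,\,x(P)=n,\,y(P)\in\CJ}\chi(g(P))\psi(f(P))$, a function of modulus at most $\deg_yP$. Then
\[
S(j_1,j_2)=\sum_{\mathbf{h}\in\{1,\ldots,H\}^{j_1}}\,\sum_{\mathbf{k}\in\{1,\ldots,H\}^{j_2}}\,\sum_{x\in\CI}\,\prod_{i=1}^{j_1}T(x+h_i)\,\prod_{l=1}^{j_2}\overline{T(x+k_l)}.
\]
For each shift pattern $(\mathbf{h},\mathbf{k})$, expanding every $T$ into its points shows that the inner sum runs over the $\mathbb{F}_p$-points — lying in a box $\CI\times\CJ^{j_1+j_2}$ — of the fiber-product curve $Y_{\mathbf{h},\mathbf{k}}\subseteq\mathbb{A}^{j_1+j_2+1}$, with coordinates $x$ and the $y$-coordinates of the $j_1+j_2$ points, cut out by the translated equations $P(x+h_i,\,\cdot)=0$ and $P(x+k_l,\,\cdot)=0$, each point weighted by $\chi(\tilde{g})\psi(\tilde{f})$ where $\tilde{g}=\prod_{i}g(P_i)/\prod_{l}g(Q_l)$ and $\tilde{f}=\sum_{i}f(P_i)-\sum_{l}f(Q_l)$ are viewed as rational functions on $Y_{\mathbf{h},\mathbf{k}}$. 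By B\'ezout, $\deg Y_{\mathbf{h},\mathbf{k}}$ and the denominator degrees of $\tilde{g},\tilde{f}$ are $O(d^{j_1+j_2})$ with constants depending only on $j_1,j_2$, so Lemma~\ref{lem:perel} bounds the inner sum by $O(d^{2(j_1+j_2)}\sqrt p\log^{j_1+j_2+1}p)$ provided its hypothesis holds: if $\psi$ is nontrivial, that no irreducible component of $Y_{\mathbf{h},\mathbf{k}}$ carries a relation $\tilde{f}=\tilde{f_1}^{p}-\tilde{f_1}+(\text{linear})$; if $\psi$ is trivial, that $\tilde{g}$ is not an $a$-th power on any component. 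Summed over the at most $H^{j_1+j_2}$ patterns, this already accounts for the $O(d^{2(j_1+j_2)}H^{j_1+j_2}\sqrt p\log^{j_1+j_2+1}p)$ term present in every case of the lemma.

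It remains to analyse the \emph{degenerate} patterns $(\mathbf{h},\mathbf{k})$, for which Lemma~\ref{lem:perel} is unavailable; these produce the main term. The heart of the matter is to understand the fibration $Y_{\mathbf{h},\mathbf{k}}\to\mathbb{A}^1$ (the content of Section~\ref{sec5}): its irreducible components, their generic fibres, and the way an $a$-th power relation for $\tilde{g}$, or an Artin--Schreier-plus-linear relation for $\tilde{f}$, on a component descends to such a relation for $g$ or $f$ on $C$. Using the hypotheses of Theorem~\ref{thm1} — $g$ not of the form $h_1^a+h_2P^b$ (nor a complete $(a/2)$-th power when $a$ is even), $f$ nonlinear on $C$ in the stated sense — I would show that degeneracy forces, when $\psi$ is nontrivial, that $\mathbf{h}$ is a permutation of $\mathbf{k}$ with the matched points coinciding (in particular $j_1=j_2$); and, when $\psi$ is trivial, that $j_1-j_2$ is a multiple of $a$, with the $\abs{j_1-j_2}$ unmatched points forced to coalesce into clusters on which $\tilde{g}$ becomes an $a$-th power. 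The hypotheses enter exactly to rule out any further degenerate component — for instance one on which two distinct branches of $C$ over a common $x$-value fail to be separated by $g$.

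For the degenerate patterns the contribution is then computed by hand. When $j_1=j_2=j$ and the points are matched one has $\tilde{g}\equiv1$ and $\tilde{f}\equiv0$, so the inner sum is a plain point count; there are $j!$ matchings, and treating $\sum_{x\in\CI}N_x^{\,j}$ by the same scheme — expanding $N_x=\#\{P\in C:x(P)\in(x,x+H],\,y(P)\in\CJ\}=(\beta-\alpha)H+O(\sqrt p\log^2 p)$ over points and applying Lemma~\ref{lem:perel} with only the additive character that arises from detecting $y\in\CJ$ — gives the main term $j!\,H^j\abs{\CI}(\beta-\alpha)^j$. Inclusion--exclusion over coincidences among the matched points, together with the truncation of the window to $\CI$, produces the secondary error $O((j!)^2H^{j-1}d^{2j}\abs{\CI})$, respectively $O(j!\,d^{2j}H^{j-1}\abs{\CI})$ when $\psi$ is trivial (where the counting is rigid enough to lose a power of $H$ from the outset). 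In the $\psi$-trivial case with $j_1-j_2$ a nonzero multiple of $a$, the analogous bookkeeping over the clustered unmatched points gives the stated $O([\frac{j_1+j_2-1}{2}]!\,d^{j_1+j_2-1}H^{[(j_1+j_2-1)/2]}\abs{\CI})$ term, while if $j_1-j_2$ is not a multiple of $a$ there is no degenerate pattern and only the $\sqrt p$-error remains. Collecting everything gives the five estimates.

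The step I expect to be the main obstacle is the algebro-geometric classification in Section~\ref{sec5}: one must control the irreducible components of the fiber product $Y_{\mathbf{h},\mathbf{k}}$, which is generally reducible and whose components may be defined only over $\overline{\mathbb{F}}_p$, and prove that a nontrivial power or Artin--Schreier relation on any component can arise only from a matching of the shifts together with one of the excluded forms of $g$ or $f$ on $C$. A priori such a relation could be ``hidden'' on $Y_{\mathbf{h},\mathbf{k}}$ yet invisible on $C$, and excluding this through the geometry of the family — its irreducibility and monodromy — is precisely the new ingredient of the paper.
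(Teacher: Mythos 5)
Your decomposition of $S(j_1,j_2)$ over shift patterns $(\mathbf{h},\mathbf{k})$, identification of the fiber-product curve $Y_{\mathbf{h},\mathbf{k}}$ (the paper's $C_{\mathbf{h}}$), application of Lemma~\ref{lem:perel} to the non-degenerate patterns, and reduction of the degenerate ones to point counts is exactly the paper's plan. But the two inputs you explicitly flag as ``the main obstacle'' are precisely what carries the proof, and you leave both unresolved. They are: (i) that for all but $O_r(H^{r-1})$ choices of distinct shifts the curve $C_{\mathbf{h}}$ is absolutely irreducible (Proposition~\ref{prop51}), needed so that the point count on a degenerate fiber is $\abs{\CI}(\beta-\alpha)^j + O(d^{2j}\sqrt p\log^{j+1}p)$ rather than the trivial $O(d^{2j}\abs{\CI})$; and (ii) that an Artin--Schreier-plus-linear relation for $F$ (resp.\ an $a$-th power relation for $G$) on any irreducible component forces the full matching of the $j_1+j_2$ points, i.e.\ $j_1=j_2$ and $F\equiv 0$ (Proposition~\ref{prop53}), which is what makes the ``diagonal'' precisely the permutation set and nothing more.

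Concretely, the mechanism you gesture at for (ii) — ``the geometry of the family, its irreducibility and monodromy'' — is not how the paper closes the gap, and a monodromy argument would be more delicate than needed. For (i) the paper normalizes $\CC^r$, views $\phi:\tilde{\CC}^r\dashrightarrow\mathbb{P}^{r-1}$ as a fibration, and applies Zariski connectedness to make every fiber connected, then observes that only a union of proper linear subspaces (hence $O_r(H^{r-2})$ integer points per translate) carries non-smooth fibers. For (ii) the paper argues directly on an alleged degenerate component $\Gamma$: at least one projection $\pi_i:\Gamma\to C_i$ is dominant, and pushing $F$ through it plus a degree count on $R_1(x)=\sum m_l r_1(x+u_l)$ (using $\deg r_1\geq 3$ and $j_1+j_2=o(\log p)$ to keep a top coefficient alive) kills the polynomial case, while a denominator-degree count kills the non-polynomial case. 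Finally, your ``inclusion--exclusion over coincidences'' has to be made precise: the paper does not sum over $(\mathbf{h},\mathbf{k})$ alone but refines by the combinatorial type $\CY$ of coincident points and works on the open curves $C^o_{\mathbf{t}(\CY)}$, precisely so that the statement of Proposition~\ref{prop53} (distinct indeterminates means distinct points) is applicable; without that refinement the coincident shifts $h_i=h_j$ sit on the automatically reducible diagonal of $C_{\mathbf{h}}$ and the bookkeeping for the $(j!)^2H^{j-1}d^{2j}\abs{\CI}$ term does not quite close.
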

\begin{lemma}\label{lemSj12real}
Let $S(x)$ be real and satisfies the assumptions in Theorem \ref{thm2}. We have
\begin{multline}\label{eqnlemSra}
S(j_1,j_2) = \frac{(j_1+j_2)!}{2^{\frac{j_1+j_2}{2}}\left(\frac{j_1+j_2}{2}\right)!}H^{\frac{j_1+j_2}{2}}\abs{\CI}(\beta-\alpha)^{\frac{j_1+j_2}{2}} \\
+O(j!d^{j_1+j_2}H^{\frac{j_1+j_2}{2}-1}\abs{\CI} + d^{2(j_1+j_2)}H^{j_1+j_2}\sqrt{p}\log^{j_1+j_2+1}{p})
\end{multline}
when $j_1+j_2$ is even, and
\begin{equation}\label{eqnlemSrb}
S(j_1,j_2) = O(H^{j_1+j_2}d^{2(j_1+j_2)}\sqrt{p}\log^{j_1+j_2+1}p)
\end{equation}
when $j_1+j_2$ is odd.
\end{lemma}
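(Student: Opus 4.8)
The plan is to estimate the $k$-th moment $M(k)$ with $k:=j_1+j_2$: since $\psi$ is trivial and $\chi$ is quadratic, $S(x)$ is real, so $\overline{S(x)}=S(x)$ and $S(j_1,j_2)=\sum_{x\in\CI}S(x)^{k}=M(k)$. Expanding $S(x)^{k}$ and substituting $x(P_i)=x+h_i$ with $h_i\in\{1,\dots,H\}$ gives
\[
M(k)=\sum_{\mathbf h\in\{1,\dots,H\}^{k}}\ \sum_{x\in\CI}\ \sum_{\substack{P_1,\dots,P_k\in C\\ x(P_i)=x+h_i,\ y(P_i)\in\CJ}}\chi\!\Big(\prod_{i=1}^{k}g(P_i)\Big),
\]
and for each fixed shift vector $\mathbf h$ the inner sum is a hybrid exponential sum (character $\chi$, trivial additive character) along the affine curve
\[
W_{\mathbf h}:=\{(x,y_1,\dots,y_k)\in\mathbb A^{k+1}:P(x+h_i,y_i)=0,\ 1\le i\le k\},
\]
the fibre product over the $x$-line of the $k$ translates of $C$; by B\'ezout $\deg W_{\mathbf h}=O(D^{k})$, and $\prod_i g(P_i)$ restricted to $W_{\mathbf h}$ has numerator and denominator of degree $O(kd)$.

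I would then split the sum over $\mathbf h$ according to whether $\prod_{i=1}^k g(P_i)$ is a square on some irreducible component of $W_{\mathbf h}$. When it is not a square on any component, Lemma \ref{lem:perel}, applied component by component (it does not need absolute irreducibility), bounds the inner sum by $O(d^{2k}\sqrt p\log^{k+1}p)$, and summing over the at most $H^{k}$ such $\mathbf h$ gives the error term $O(d^{2k}H^{k}\sqrt p\log^{k+1}p)$ appearing in both \eqref{eqnlemSra} and \eqref{eqnlemSrb}. Everything else is concentrated on the $\mathbf h$ for which some component of $W_{\mathbf h}$ carries $\prod_i g(P_i)$ as a square.

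Here the geometry enters (Section \ref{sec5}): using the hypothesis that $g$ is not a square on $C$ (not of the form $h_1^{2}+h_2P^{b}$) together with absolute irreducibility of $C$, one shows that a square component can occur only when $\mathbf h$ is \emph{pairable}, i.e.\ $\{1,\dots,k\}$ can be partitioned into pairs on which both the $h_i$ and the $P_i$ are constant; on the resulting diagonal components $\prod_i g(P_i)$ is a product of squares $\prod_j g(Q_j)^{2}$, whereas every off-diagonal component still carries $\prod_i g(P_i)$ as a non-square, so Lemma \ref{lem:perel} disposes of those. For $k$ odd no $\mathbf h$ is pairable, hence $M(k)=O(d^{2k}H^{k}\sqrt p\log^{k+1}p)$, which is \eqref{eqnlemSrb}. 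For $k=2t$ the main term comes from perfect matchings $\mathcal M$ of $\{1,\dots,k\}$ — there are $\frac{k!}{2^{k/2}(k/2)!}$ of them — together with shift vectors constant on the pairs of $\mathcal M$ whose $t$ pair-values $g_1,\dots,g_t$ are distinct; for such data the unique square component is the full diagonal $\Delta_{\mathbf g}\cong C^{(g_1)}\times_x\cdots\times_x C^{(g_t)}$, on which $\chi(\prod_i g(P_i))=\prod_j\mathbf 1[g(Q_j)\neq 0]$, so this component contributes the point count of $\Delta_{\mathbf g}$ inside $\CI\times\CJ^{t}$. By absolute irreducibility of $\Delta_{\mathbf g}$ (again Section \ref{sec5}, for all but $O(H^{t-1})$ values of $\mathbf g$) and the usual completion plus Weil estimate for points of a curve in a box, this count is $|\CI|(\beta-\alpha)^{t}+O(d^{2k}\sqrt p\log^{k+1}p)$; summing over matchings and over the $H^{t}+O(H^{t-1})$ admissible $\mathbf g$ yields the main term $\frac{k!}{2^{k/2}(k/2)!}H^{t}|\CI|(\beta-\alpha)^{t}$. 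The leftover pairable configurations (repeated pair-values or a block of size $\ge 4$ — only $O(H^{t-1})$ of them, each of size $O(d^{k}|\CI|)$ by the trivial bound — the exceptional reducible $\Delta_{\mathbf g}$, and the $g(Q_j)=0$ points) contribute $O((k/2)!\,d^{k}H^{t-1}|\CI|)$, which is the first error term of \eqref{eqnlemSra}.

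The main obstacle is the geometric input of Section \ref{sec5}: that off-diagonal components of the fibre products $W_{\mathbf h}$ never carry $\prod_i g(P_i)$ as a square, and that the diagonal fibre products $\Delta_{\mathbf g}$ are absolutely irreducible for all but $O(H^{t-1})$ choices of $\mathbf g$. This is precisely where the absolute irreducibility of $C$ and the hypothesis on $g$ are used — including the use of $\overline{\mathbb F}_p$-coefficients in $h_1$, which is needed to exclude Frobenius-twisted squares — and it rests on understanding the monodromy of the fibration $C\to\mathbb A^1_x$ and the way $g$ restricts to fibre products of its translates. Once those statements are available, the remainder is bookkeeping: pushing the degree factors $d$ through B\'ezout and Lemma \ref{lem:perel}, and checking that the errors from collisions and from the Weil estimates have the stated shape.
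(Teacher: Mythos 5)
Your proposal follows the same route as the paper: it introduces the same family of fibre-product auxiliary curves (the paper's $C_{\mathbf{h}}$ and $C^o_{\mathbf{t}(\CY)}$, your $W_{\mathbf{h}}$ and its diagonal/off-diagonal pieces), splits $S(j_1,j_2)$ into diagonal and off-diagonal terms, disposes of the off-diagonal via Lemma~\ref{lem:perel}, counts the $\tfrac{k!}{2^{k/2}(k/2)!}$ perfect matchings for the main term, invokes the generic absolute irreducibility from Section~\ref{sec5} (Proposition~\ref{prop51}/Corollary~\ref{cor52}) plus equidistribution for the point count, and absorbs the degenerate pairable configurations into the first error term. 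The only organizational difference is that the paper tracks coincidences among the $P_i$ through the auxiliary index set $\CY$ and works directly on the curves $C^o_{\mathbf{t}(\CY)}$, whereas you decompose the full fibre product $W_{\mathbf{h}}$ into its irreducible components -- an equivalent device.
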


Expanding the sum $S(x)$ in $S(j_1,j_2)$, we obtain
\begin{align}
S(j_1,j_2)&=\sum_{x\in\CI}\sum_{\substack{P_{1}\in C, x<x(P_{1})\leq x+H \\ y(P_{1})\in\mathcal{J}}}\cdots\sum_{\substack{P_{j_1+j_2}\in C, x<x(P_{j_1+j_2})\leq x+H \\ y(P_{j_1+j_2})\in\mathcal{J}}} \label{eqnsumrange}\\ 
&\qquad \prod_{l=1}^{j_1}\chi(g(P_{l}))\psi(f(P_{l}))\prod_{l=j_1+1}^{j_1+j_2}\bar{\chi}(g(P_{l}))\bar{\psi}(f(P_{l})) \nonumber \\
&=\sum_{x\in\CI}\sum_{\substack{P_{1}\in C, x<x(P_{1})\leq x+H \\ y(P_{1})\in\mathcal{J}}}\cdots\sum_{\substack{P_{j_1+j_2}\in C, x<x(P_{j_1+j_2})\leq x+H \\ y(P_{j_1+j_2})\in\mathcal{J}}} \nonumber \\
&\qquad \chi\left( \frac{g(P_{1})\ldots g(P_{j_1})}{g(P_{j_1+1})\ldots g(P_{j_1+j_2})} \right)\psi\left( \sum_{l=1}^{j_1}f(P_{l})-\sum_{l=j_1+1}^{j_1+j_2}f(P_{l}) \right). \nonumber
\end{align}

When $C$ is the affine line and $\CJ$ is the full interval, all the $x(P_i)$ lie on a straight line and the usual way is to proceed by switching the order of summation and then use the classical Bombieri-Weil bound on $\mathbb{A}^1$. For a general curve the contents inside the characters are generally not a rational function on $C$. So the usual strategy does not work. Instead, we will introduce two families of auxiliary curves $C_{\mathbf{h}}$ and $C^o_{\mathbf{h}}$ in the next section, and transform the contents inside the characters into rational functions on these curves.

\section{The auxillary curve $C_{\mathbf{h}}$}\label{sec5}

Recall that $C\subseteq\mathbb{A}^2_p:=\mathbb{A}^2(\mathbb{F}_p)$ is an absolutely irreducible affine plane curve (not necessarily smooth) over $\mathbb{F}_p$ of degree $d>1$, defined by the equation $P(x,y)=0$. \textit{Fix} $r$ integers $h_1,\ldots,h_r$, which may or may not be distinct. Let $\mathbf{h}=(h_1,\ldots,h_r)$. Define the variety $C_{\mathbf{h}}\subseteq\mathbb{A}^{r+1}$ by the following system of equations:
\begin{equation} \label{defCH}
P(x+h_i,y_i)=0 \,\,\forall \,\, 1\leq i\leq r.
\end{equation}
Similar constructions have appeared in \cite{MaZa11, MaZa12, Mak14}. Note that there are totally $r$ equations and $r+1$ variables ($x$ and $y_i$ for $1\leq i\leq r$). It is easy to see that $C_{\mathbf{h}}$ is a curve for any $\mathbf{h}$. Its degree is at most $d^{r}$, where $d$ is the degree of $C$.

A point $(x,y_1,\ldots,y_r)$ corresponds to an $r$-tuple $(P_1,\ldots,P_r)$ of points on $C$ such that $x(P_i)=x+h_i$ for all $1\leq i\leq r$. It is clear that this correspondence is one-to-one. Therefore, the sum in \eqref{eqnsumrange} is the same as summing points on $C_{\mathbf{h}}$ for all possible $\mathbf{h}=(h_1,\ldots,h_r)$ with $0 h_i\leq H$.

We are interested in the irreducibility of this curve $C_{\mathbf{h}}$. It is not difficult to see that for any $C$, the curve $C_{\mathbf{h}}$ cannot be irreducible if there are $i\neq j$ such that $h_i=h_j$. The total number of $\mathbf{h}$ with this property is at most $O(H^{r-1})$. The following proposition states that most of the other $C_{\mathbf{h}}$ are irreducible.
\begin{prop}\label{prop51}
Let $p$ be a large prime, and let $C$ be an absolutely irreducible curve over $\mathbb{F}_p$. Let $H=H(p)>0$ be an integer function that tends to infinity as $p$ tends to infinity, and let $\mathbf{h}=(h_1,\ldots,h_r)$ with $0< h_i\leq H$ for all $i$. The curve $C_{\mathbf{h}}$ is absolutely irreducible except for at most $O_r(H^{r-1})$ of them.
\end{prop}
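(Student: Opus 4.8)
The plan is to set up an induction on $r$, building $C_{\mathbf{h}}$ one fiber at a time. Write $\mathbf{h}' = (h_1,\ldots,h_{r-1})$ and view $C_{\mathbf{h}}$ as fibered over $C_{\mathbf{h}'}$ via the projection $(x,y_1,\ldots,y_r)\mapsto(x,y_1,\ldots,y_{r-1})$; the fiber over a point of $C_{\mathbf{h}'}$ is the set of $y_r$ with $P(x+h_r,y_r)=0$, a set of size $\deg_y P = O(1)$ generically. By the inductive hypothesis we may assume $C_{\mathbf{h}'}$ is absolutely irreducible for all but $O_r(H^{r-2})$ choices of $\mathbf{h}'$; those exceptional $\mathbf{h}'$ contribute at most $O_r(H^{r-2})\cdot H = O_r(H^{r-1})$ bad $\mathbf{h}$, which is within the allowed error. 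So it suffices to show: if $C_{\mathbf{h}'}$ is absolutely irreducible, then $C_{\mathbf{h}}$ is absolutely irreducible for all but $O_r(1)$ values of $h_r \in (0,H]$ (the implied constant may depend on $C_{\mathbf{h}'}$, but uniformly since there are $O(H^{r-1})$ choices of $\mathbf{h}'$ in total, each contributing $O_r(1)$, giving $O_r(H^{r-1})$ — here one must be slightly careful that the $O_r(1)$ is genuinely uniform, which it will be because the relevant geometric quantities are bounded in terms of $d$ and $r$ only).

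For the one-step claim, consider the function field $K = \overline{\mathbb{F}}_p(C_{\mathbf{h}'})$ and the curve $C' := C$ base-changed so that we adjoin a root $y_r$ of $P(x+h_r, y_r) = 0$ over $K$. Equivalently, $C_{\mathbf{h}}$ is the fiber product of $C_{\mathbf{h}'}\to \mathbb{A}^1_x$ with the map $C \to \mathbb{A}^1$ sending $(u,v)\mapsto u - h_r$. Absolute irreducibility of $C_{\mathbf{h}}$ fails precisely when the polynomial $P(x+h_r, T)\in K[T]$ factors over $K$ (or its separable closure intersected appropriately). The key tool is a geometric monodromy / Galois-theoretic argument: the covering $C \to \mathbb{A}^1_x$ (via the $x$-coordinate, after the translation by $h_r$) has some monodromy group $G$ acting on the $\deg_y P$ sheets; $C_{\mathbf{h}}$ is irreducible iff the monodromy of the compositum $C_{\mathbf{h}'} \to \mathbb{A}^1_x$ still acts transitively on those sheets after pullback, which holds unless the two coverings $C_{\mathbf{h}'}\to\mathbb{A}^1_x$ and $(C,\,x\mapsto x-h_r)\to\mathbb{A}^1_x$ share a common subcovering. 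I would argue that for all but finitely many $h_r$, no such common subcovering exists: a common subcovering would force a correspondence between $C$ and itself shifted by $h_r$, and only finitely many shifts $h_r$ can produce such a correspondence (otherwise one gets an infinite family of correspondences on the fixed curve $C$, forcing $C$ to have special structure — e.g. to be a translate-invariant curve, i.e. essentially a union of vertical or "periodic" pieces — contradicting absolute irreducibility and the hypothesis that $C$ is not a vertical line). Quantitatively, the number of bad $h_r$ is bounded by the number of branch points and the genus/degree data of $C$, all $O_r(1)$.

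The main obstacle is making the "only finitely many bad shifts $h_r$" step rigorous and, crucially, uniform in $\mathbf{h}'$. The clean way is to work with the fiber product $Z \subseteq \mathbb{A}^1_{h_r} \times C_{\mathbf{h}}$ parametrizing triples (value of $h_r$, point of the corresponding $C_{\mathbf{h}}$) — i.e. treat $h_r$ as an additional variable — and show this total space is irreducible of the expected dimension, with the map to the $h_r$-line having irreducible generic fiber; then the set of $h_r$ with reducible or lower-dimensional fiber is a proper closed subset, hence $O_r(1)$ points, by a Bertini-type or Noetherian-openness argument (the locus where a fiber of a flat morphism stays geometrically irreducible is open, by EGA IV). Irreducibility of the total space $Z$ reduces to irreducibility of $C \times \mathbb{A}^1_{h_r} \to \mathbb{A}^1_x$ pulled back along $C_{\mathbf{h}'}$, and since the first projection is now dominant with irreducible total space $C\times\mathbb{A}^1$ (and generically the fiber product of an irreducible variety with a covering along a dominant map to a common base is irreducible provided the covering is "new" — which, after adding the free parameter $h_r$, it is), this should go through. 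The degree bounds needed for uniformity come from the fact that everything in sight has degree bounded by a function of $d$ and $r$, so the number of exceptional $h_r$ is $O_r(1)$ with the implied constant independent of $\mathbf{h}'$, completing the induction.
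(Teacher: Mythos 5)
Your proposal takes a genuinely different route from the paper. The paper works globally: it identifies the total space (with $\mathbf{h}$ as variables) with $C^r$, exploits the translation symmetry $C_{\mathbf{h}}\cong C_{\mathbf{h}+a}$ to pass to the fibration $\phi:C^r\to\mathbb{A}^{r-1}$, $(x_i,y_i)\mapsto(x_2-x_1,\ldots,x_r-x_1)$, and then shows the fibers are absolutely irreducible by two separate arguments: connectedness of \emph{all} fibers via Zariski's connectedness theorem (using normality of $\mathbb{P}^{r-1}$ and the normalization $\tilde{\CC}$ of $\CC$), and smoothness of all but $O_r(H^{r-2})$ fibers because the critical locus is cut out by \emph{linear} conditions of the form ``$h_i-h_j$ equals a difference of two critical values of $x:\tilde{\CC}\to\mathbb{P}^1$''. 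You instead induct on $r$, reducing to a one-step statement about how many $h_r$ spoil irreducibility of $C_{\mathbf{h}'}\times_{\mathbb{A}^1_x}C_{h_r}$. The bookkeeping of the induction is correct and would yield $O_r(H^{r-1})$ if the one-step claim held uniformly.

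The one-step claim, however, has a real gap. You want to conclude geometric irreducibility of the generic fiber of $Z\to\mathbb{A}^1_{h_r}$ from irreducibility of the total space $Z$, and then invoke openness of the geometrically-irreducible-fiber locus. But irreducibility of $Z$ alone does not give geometric irreducibility of the generic fiber: consider $Z=\{(x,h):x^2=h\}\to\mathbb{A}^1_h$, which has irreducible (even smooth) total space but a generic fiber $\operatorname{Spec}\bigl(k(h)[x]/(x^2-h)\bigr)$ that becomes two points over $\overline{k(h)}$ — and then \emph{every} geometric fiber is reducible, so the openness argument returns the empty set. So the heart of the matter is precisely the parenthetical you wave away, ``provided the covering is new, which after adding the free parameter $h_r$ it is'': you must actually prove that over the generic point of $\mathbb{A}^1_{h_r}$ the cover $C_{h_r}\to\mathbb{A}^1_x$ is linearly disjoint from $C_{\mathbf{h}'}\to\mathbb{A}^1_x$. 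This is where the monodromy/correspondence argument you sketch in the first half would have to be carried out, and you have not carried it out. The paper's Zariski-connectedness-plus-smooth-fiber route is specifically designed to sidestep an explicit linear disjointness computation: connectedness holds everywhere by Zariski, and irreducibility is obtained by intersecting with the smoothness condition, which is controlled by the finitely many branch points of the single fixed map $\tilde{\CC}\to\mathbb{P}^1_x$, hence linear in $\mathbf{h}$.

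The secondary issue is the uniformity of the $O_r(1)$ bound. Even granting that for each irreducible $C_{\mathbf{h}'}$ only finitely many $h_r$ are bad, you need the number to be bounded in terms of $d$ and $r$ only, not $\mathbf{h}'$. You gesture at degree bounds, and in fact they should hold ($\deg C_{\mathbf{h}'}\le d^{r-1}$ and discriminant/resultant degrees in $h_r$ can be bounded), but this is exactly the kind of estimate the paper's linear-conditions argument hands you for free, whereas in your setup it has to be made explicit. Finally, you should separate out the $\mathbf{h}'$ with repeated coordinates (those $C_{\mathbf{h}'}$ are never irreducible and break the inductive hypothesis), though this only contributes $O_r(H^{r-2})$ and does not affect the final bound.
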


To prove Proposition \ref{prop51}, we first construct a variety related to $C_{\mathbf{h}}$. Let $V$ be the variety defined by the set of equations \eqref{defCH}, but with the $h_i$ also considered as variables. Thus $V$ is of dimension $r$. The structure of $V$ is very simple.
\begin{lemma}
The variety $V$ is isomorphic to $C^r$. In particular $V$ is absolutely irreducible (since $C$ is).
\end{lemma}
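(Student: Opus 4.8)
The plan is to exhibit an explicit isomorphism between $V$ and the $r$-fold product $C^r$, and then invoke the fact that a product of absolutely irreducible varieties is absolutely irreducible. Write the coordinates on $V$ as $(x, y_1, \ldots, y_r, h_1, \ldots, h_r)$ subject to $P(x+h_i, y_i) = 0$ for all $i$, and write the coordinates on $C^r$ as $(u_1, v_1, \ldots, u_r, v_r)$ with $P(u_i, v_i) = 0$ for all $i$. The natural guess is the map $\phi\colon V \to C^r$ sending $(x, \mathbf{y}, \mathbf{h}) \mapsto ((x+h_1, y_1), \ldots, (x+h_r, y_r))$; the point is that this is a morphism of affine varieties (each component is a polynomial in the coordinates of $V$) whose image lies in $C^r$ precisely because the defining equations of $V$ say $P(x+h_i, y_i) = 0$.

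The main thing to check is that $\phi$ is an isomorphism. I would do this by writing down the inverse explicitly. Given $((u_1,v_1),\ldots,(u_r,v_r)) \in C^r$, set $x = u_1$, then $h_i = u_i - u_1$ for each $i$ (so $h_1 = 0$), and $y_i = v_i$. One checks immediately that $P(x + h_i, y_i) = P(u_i, v_i) = 0$, so this lands in $V$, and that the two assignments are mutually inverse; both directions are given by polynomial formulas, so we have an isomorphism of affine varieties over $\mathbb{F}_p$ (indeed an isomorphism of schemes, since everything is done at the level of coordinate rings). Note $h_1 = 0$ on the copy of $V$ appearing here, which is harmless: $V$ as defined simply uses the first shift as the "base point", and the lemma is about irreducibility, not about the specific affine embedding. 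One should remark that the $h_i$ really must be treated as free variables for this to work — it is exactly the freedom in $\mathbf{h}$ that makes $V$ large enough to be all of $C^r$.

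Finally, since $C$ is absolutely irreducible by hypothesis, $C_{\overline{\mathbb{F}}_p}^r = (C \times_{\mathbb{F}_p} \overline{\mathbb{F}}_p)^r$ is irreducible: a product of irreducible varieties over an algebraically closed field is irreducible (equivalently, the tensor product of the corresponding domains over $\overline{\mathbb{F}}_p$ is a domain, which holds because one factor is a domain over an algebraically closed — hence in particular separably closed — field). Transporting this through the isomorphism $\phi$ gives that $V$ is absolutely irreducible, of dimension $r \cdot \dim C = r$, as claimed.

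I do not anticipate a serious obstacle here; the only mild subtlety is being careful that "product of irreducible varieties is irreducible" genuinely requires the ground field to be (separably) algebraically closed — which is why the statement is phrased in terms of \emph{absolute} irreducibility and why the hypothesis that $C$ is absolutely irreducible is used rather than mere irreducibility over $\mathbb{F}_p$.
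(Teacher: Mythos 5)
Your approach is the same as the paper's: write down the explicit (linear) change of coordinates $x_i = x + h_i$ relating $V$ and $C^r$, and then use that a product of absolutely irreducible varieties over an algebraically closed field is irreducible. The paper's proof is a one-liner to exactly this effect.

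However, there is a genuine imprecision in your argument that you noticed but then brushed aside, and it is worth stating cleanly. Your two maps are \emph{not} mutually inverse: if $\phi(x,\mathbf{y},\mathbf{h}) = ((x+h_i,y_i))_i$ and $\psi((u_i,v_i)_i) = (u_1, \mathbf{v}, (u_i-u_1)_i)$, then $\phi\circ\psi = \mathrm{id}$ but $\psi\circ\phi(x,\mathbf{y},\mathbf{h}) = (x+h_1,\mathbf{y},(h_i-h_1)_i)$, which equals the input only when $h_1=0$. So $\psi$ is a section of $\phi$, not an inverse; $\phi$ collapses the one-parameter family of translations $(x,\mathbf{h}) \mapsto (x+a,\mathbf{h}-a)$. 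Dimension-counting confirms this: $V$ sits in $\mathbb{A}^{2r+1}$ cut out by $r$ equations, hence has dimension $r+1$, whereas $C^r$ has dimension $r$. The correct statement is that the linear automorphism of $\mathbb{A}^{2r+1}$ sending $(x,\mathbf{y},\mathbf{h})$ to $(x,\mathbf{y},(x+h_i)_i)$ carries $V$ isomorphically onto $\mathbb{A}^1_x \times C^r$, so $V \cong \mathbb{A}^1 \times C^r$. This costs nothing for the conclusion you actually need: $\mathbb{A}^1 \times C^r$ is still absolutely irreducible because a product of irreducible varieties over $\overline{\mathbb{F}}_p$ is irreducible (and this is what feeds into the Zariski connectedness argument for Proposition 5.1). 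To be fair, the paper's lemma statement has the same over-claim (``$V$ is isomorphic to $C^r$''), but one should not write ``the two assignments are mutually inverse'' and then in the next breath observe that the composite forces $h_1 = 0$; that observation is precisely the reason the maps are not mutually inverse, and the clean fix is to keep the extra $\mathbb{A}^1$ factor rather than quotient it away informally.
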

\begin{proof}
Let $C^r$ be defined by $P(x_i,y_i)=0$ for $1\leq i\leq r$. The map $C^r\rightarrow V$ given by $x_i\mapsto x+h_i$ (and $y_i$ maps to itself) is clearly a (linear) isomorphism.
\end{proof}

Let $\phi: C^r\rightarrow \mathbb{A}^{r-1}$ be the fibration given by
\begin{equation}\label{defphi}
\phi(x_1,\ldots,x_r,y_1,\ldots,y_r)=(x_2-x_1,x_3-x_1,\ldots,x_r-x_1).
\end{equation}
It is easy to see that $C_{\mathbf{h}}$ is isomorphic to $C_{\mathbf{h}+a}$ for any $a$, where $\mathbf{h}+a=(h_1+a,\ldots,h_r+a)$. Therefore, it suffices show that the fibres of $\phi$ over the box $(-H,H]^{r-1}$ are absolutely irreducible except for at most $O_r(H^{r-2})$ of them, because $H$ translate of the tuples $(0,h_2,\ldots,h_r)$ with $-H< h_i\leq H$ covers $(0,H]^{r}$. Since the number of fibres with at least one pair of $x_i=x_j$ is $O_r(H^{r-2})$, we may assume that all the $x_i$'s are distinct.

Let $\CC$ be the projectivization of $C$, and consider the rational map $\phi:\CC^r\dashedrightarrow \mathbb{P}^{r-1}$ extending the map $\phi$ in \eqref{defphi}. Let $\tilde{\CC}$ be the normalization of $\CC$. Let $\tilde{\phi}:\tilde{\CC}^r\dashedrightarrow \mathbb{P}^{r-1}$ be the map that makes the following diagram commutes.
\begin{equation*}
\xymatrix{
\tilde{\CC}^r \ar@{->}[d] \ar@{-->}[dr]^{\tilde{\phi}} & \\
\CC^r \ar@{-->}[r]^{\phi} & \mathbb{P}^{r-1}
}
\end{equation*}
The generic fibre of $\tilde{\phi}$ is irreducible since $\tilde{\CC}^r$ is (the generic point of $\tilde{\CC}^r$ maps to the generic point of $\mathbb{P}^{r-1}$). Since $\mathbb{P}^{r-1}$ is normal, every fibre of $\tilde{\phi}$ is connected by Zariski's connectedness theorem \cite{Gro61}. 

On the other hand, as the $x_i$'s are all distinct, a point $P=(h_2,\ldots,h_r)\in\mathbb{P}^{r-1}$ is a regular point of $\tilde{\phi}$ unless there are distinct $1\leq i,j\leq r$ such that both $x_i$ and $x_j$ are critical values of the map $\tilde{\CC}\rightarrow\mathbb{P}^1_x$ (here $\mathbb{P}^1_x$ is the projective line with variable $x$). These are all \textit{linear} conditions. Therefore, a point $P$ can be a critical value only if it lies on a union of proper linear subspaces in $\mathbb{P}^{r-1}$. Since the number of points inside a linear subspace $Q$ whose all coordinates are in $(-H,H]$ is at most $O_r(H^{r-2})$, the number of critical values of $\tilde{\phi}$ is at most $O_r(H^{r-2})$. The fibre over a regular point is smooth, so the fibres of $\tilde{\phi}$ are smooth except for at most $O_r(H^{r-2})$ of them.

We have shown that except for at most $O_r(H^{r-2})$ of the fibres, all other fibres of $\tilde{\phi}$ are both connected and smooth, hence they are absolutely irreducible. Their corresponding fibres in $\phi:\CC^r\rightarrow\mathbb{P}^{r-1}$ are thus absolutely irreducible. This completes the proof of Proposition \ref{prop51}.

\begin{remark}
It is easy to see that for any $\mathbf{h}$, the curve $C_{\mathbf{h}}$ is rational when $C$ is rational. Hence they are irreducible. When $C$ is hyperelliptic, and when all the $h_i$ are distinct, then the author \cite{Mak14} showed that $C_{\mathbf{h}}$ is irreducible. It is very possible that the same is true for all curve $C$, i.e. the set
\begin{equation*}
\{ (h_1,\ldots,h_r)\in (0,H]^r: h_i\neq h_j \text{~for~} i\neq j, C_{\CH} \text{~is not absolutely irreducible} \}
\end{equation*}
is empty. In any case, a more precise estimate on the cardinality of the above set will improve the error terms of the moments, but the author was not able to prove anything better than Proposition \ref{prop51}.
\end{remark}

Next, we let $C^o_{\mathbf{h}}$ be defined by the equations \eqref{defCH} together with the condition that whenever $h_i=h_j$ for some $i\neq j$, then we require $y_i\neq y_j$. Thus a point $(x,y_1,\ldots,y_r)$ corresponds to an $r$-tuple $(P_i,\ldots,P_r)$ such that $x(P_i)=x+h_i$ and all $P_i$ are distinct. Note that $C^o_{\mathbf{h}}=C_{\mathbf{h}}$ when all the $h_i$ are distinct. It is easy to see that $C^o_{\mathbf{h}}$ is obtained from $C_{\mathbf{h}}$ by removing finitely many points, so it is an open set in the curve $C_{\mathbf{h}}$. In particular, $C^o_{\mathbf{h}}$ is itself an affine curve (see \cite[Lemma I.4.2]{Har77}). As an open affine in an absolutely irreducible curve, we immediately have the following for $C^o_{\mathbf{h}}$.
\begin{cor}\label{cor52}
Let $\mathbf{h}=(h_1,\ldots,h_r)$ with $0< h_i\leq H$ for all $i$. The curve $C^o_{\mathbf{h}}$ is absolutely irreducible except for at most $O_r(H^{r-1})$ of them.
\end{cor}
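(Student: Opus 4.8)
The plan is to deduce this directly from Proposition \ref{prop51}. The key observation, already noted above, is that for every $\mathbf{h}$ the curve $C^o_{\mathbf{h}}$ is a nonempty Zariski-open subvariety of $C_{\mathbf{h}}$: it is obtained from $C_{\mathbf{h}}$ by deleting the finitely many points at which $y_i=y_j$ for some pair $i\neq j$ with $h_i=h_j$, and since $C_{\mathbf{h}}$ is one-dimensional the complement of finitely many points is still a (nonempty) curve.

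Next I would use the elementary fact that a nonempty open subset of an irreducible topological space is again irreducible, together with the fact that base change to $\overline{\mathbb{F}}_p$ commutes with passing to an open subscheme. Concretely, if $C_{\mathbf{h}}$ is absolutely irreducible, then $(C_{\mathbf{h}})_{\overline{\mathbb{F}}_p}$ is irreducible, and $(C^o_{\mathbf{h}})_{\overline{\mathbb{F}}_p}$ is the nonempty open subset of it obtained by the same point deletions, hence irreducible; thus $C^o_{\mathbf{h}}$ is absolutely irreducible. In other words, $C^o_{\mathbf{h}}$ inherits absolute irreducibility from $C_{\mathbf{h}}$.

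Finally, Proposition \ref{prop51} asserts that among the tuples $\mathbf{h}=(h_1,\ldots,h_r)$ with $0<h_i\leq H$, all but $O_r(H^{r-1})$ of them give an absolutely irreducible $C_{\mathbf{h}}$. For each such $\mathbf{h}$ the curve $C^o_{\mathbf{h}}$ is then absolutely irreducible by the previous paragraph, so $C^o_{\mathbf{h}}$ can fail to be absolutely irreducible only among the $O_r(H^{r-1})$ exceptional tuples. This is exactly the assertion of the corollary.

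The main point worth flagging is that there is essentially no real obstacle here: all of the geometric content — analyzing the fibration $\tilde{\phi}$, the linear-subspace counting of the bad $\mathbf{h}$, and the appeal to Zariski's connectedness theorem — has already been carried out in the proof of Proposition \ref{prop51}, and the corollary is a purely formal consequence of that proposition together with the stability of absolute irreducibility under passing to nonempty open subvarieties. The only small thing to keep straight is that $C^o_{\mathbf{h}}$ is nonempty (so that ``open in irreducible is irreducible'' applies), which is clear since it is a curve with finitely many points removed.
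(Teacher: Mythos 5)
Your argument is correct and coincides with the paper's: the paper also observes that $C^o_{\mathbf{h}}$ is obtained from $C_{\mathbf{h}}$ by removing finitely many points, hence is a nonempty open subcurve, and then concludes immediately from Proposition \ref{prop51} that absolute irreducibility is inherited. You spell out the base-change step slightly more explicitly than the paper, but the route is the same.
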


Next, we will consider a family of rational functions on $C_{\mathbf{h}}$, and determine whether it is non-trivial in the sense that Lemma \ref{lem:perel} is applicable to such functions. We first describe our setting. Let $\mathbf{h}=(h_1,\ldots,h_{j_1+j_2})$ be as usual. Let $y_1,\ldots,y_{j_1+j_2}$ be a set of indeterminates that may or may not be distinct, but we impose the restriction that $y_i$ and $y_j$ can stand for the same indeterminate only if $h_i=h_j$ (the converse need not hold). Let $\CY$ be the set of all indeterminates, i.e. $\CY=\{y_1,\ldots,y_{j_1+j_2}\}$ (with multiplicities discarded). Clearly $\CY$ depends on $\mathbf{h}$, but one $\mathbf{h}$ can yield different $\CY$ according to whether we give two distinct variables to a pair $h_i=h_j$ or not. We will say $\CY$ belongs to $\mathbf{h}$ if a set of indeterminates $\CY=\{y_1,\ldots,y_{j_1+j_2}\}$ can be constructed from $\mathbf{h}$ in the above fashion. Let $I_{\CY}$ be the set of indices corresponding to the distinct indeterminates in $\CY$, i.e. $\CY=\{y_i: i\in I_{\CY}\}$, and let 
\begin{equation*}
\mathbf{t}=\mathbf{t}(\CY):=\{h_i: i\in I_{\CY}\},
\end{equation*}
where as an ordered pair we require it to preserve the original order of $\mathbf{h}$ and keep only the first occurrence of duplicated $y_i$.

Let $f\in\mathbb{F}_p(x,y)$ be a rational function on $C$, and consider the combination
\begin{equation}\label{eqnFY}
F(x,\CY)=\sum_{j=1}^{j_1}f(x+h_j,y_j)-\sum_{j=j_1+1}^{j_1+j_2}f(x+h_j,y_j).
\end{equation}
Then $F$ can be viewed as a rational function on the curve $C_{\mathbf{t}}$, and if we want distinct indeterminates to correspond to distinct points on $C$, we can view $F$ as a rational function on $C^o_{\mathbf{t}}$. The following proposition characterizes when is $F(x,\CY)$ non-trivial.
\begin{prop}\label{prop53}
Let $p$ be a large prime, and let $C$ be an absolutely irreducible plane curve over $\mathbb{F}_p$. Let $f\in\mathbb{F}_p(x,y)$ be a rational function on $C$, $f=f_1/f_2$, $f_1,f_2\in\mathbb{F}_p[x,y]$, $\deg{f_1},\deg{f_2}<p$ and $f_1,f_2$ has no common factors. Suppose that $f$ is not linear on $C$, and subject to the following conditions:
\begin{enumerate}
\item If $f$ is a polynomial, then write $f(x,y)=r_1(x)+r_2(x,y)$, where $r_1$ consists of all terms which do not depend on $y$. We further assume that either $r_2$ is not linear, or if $r_2$ is linear, then $\deg{r_1}\geq 3$.
\item If $f$ is not a polynomial, i.e. $\deg{f_2}\geq 1$, then assume $\deg{f_2}=o(\log{p})$ is small.
\end{enumerate}
Let $H$, $j_1,j_2$ be positive integers so that both $H=o(\log{p})$ and $j_1+j_2=o(\log{p})$. Let $0< h_1,\ldots, h_{j_1+j_2}\leq H$ be integers, which may or may not be distinct. Let $F(x,\CY)$ and $\mathbf{t}(\CY)$ be defined as in \eqref{eqnFY}. If $F=\tilde{h}^p-\tilde{h}+\text{(linear terms)}$ for some rational function $\tilde{h}$ on any irreducible component of $C_{\mathbf{t}(\CY)}$, then we must have $j_1=j_2$ and $F(x,\CY)$ is the zero polynomial.
\end{prop}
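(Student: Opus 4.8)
\emph{Proof proposal.} The plan is to suppose $F=\tilde h^{p}-\tilde h+\ell$ on some irreducible component $\Gamma$ of $C_{\mathbf t(\CY)}$, with $\tilde h\in\overline{\mathbb F}_p(\Gamma)$ and $\ell$ an $\mathbb F_p$-linear form in $x$ and the $y_i$, and to deduce that the integers defined next all vanish. First I would repackage $F$: grouping the summands of \eqref{eqnFY} by the indeterminate they carry gives $F=\sum_{i\in I_{\CY}}n_i\,f(x+h_i,y_i)$ with $n_i=a_i-b_i\in\mathbb Z$, where $a_i$ (resp.\ $b_i$) counts the first $j_1$ (resp.\ last $j_2$) summands using $y_i$; thus $\sum_i a_i=j_1$, $\sum_i b_i=j_2$, and since $f\not\equiv 0$ (a zero $f$ is linear) and the $y_i$ for $i\in I_{\CY}$ satisfy no common relation, it suffices to show every $n_i=0$ (this gives both $j_1=j_2$ and the triviality of $F$). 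Two facts drive the argument. First, $x$ is nonconstant on any component $\Gamma$ (else $\Gamma$ would be $0$-dimensional), so $\Gamma\to\mathbb P^1_x$ is finite of degree $[\overline{\mathbb F}_p(\Gamma):\overline{\mathbb F}_p(x)]\le\prod_i\deg_yP\le d^{\,|I_{\CY}|}\le d^{\,j_1+j_2}=p^{o(1)}<p$ for large $p$ (and likewise over each $\overline{\mathbb F}_p(x,y_i)$); combined with $\deg f_1,\deg f_2<p$, all the $x$-degrees and powers $h_i^{k}$ met below remain $<p$. Second, $\Gamma$ dominates every shifted copy $C^{(i)}=\{P(x+h_i,y_i)=0\}$, so each $f(x+h_i,y_i)$ is a genuine function on $\Gamma$.

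The first step confines $\tilde h$. Comparing polar parts in $\tilde h^{p}-\tilde h=F-\ell$ at a place $v$ of the smooth model shows $F-\ell$ cannot have a pole at $v$ of order in $\{1,\dots,p-1\}$. At a finite place $\ell$ is regular, and in the non-polynomial case a finite pole of $f(x+h_i,y_i)$ on $\Gamma$ has order at most $(d\deg f_2)\cdot[\overline{\mathbb F}_p(\Gamma):\overline{\mathbb F}_p(x,y_i)]=o(\log p)\cdot p^{o(1)}<p$ by the hypothesis $\deg f_2=o(\log p)$ (in the polynomial case $F$ has no finite poles at all); hence $F$, and so $\tilde h$, is regular at every finite place, and its poles over $x=\infty$ have order $<p$ because $\deg f_1,\deg f_2<p$. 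The second step descends to one variable: since $[\overline{\mathbb F}_p(\Gamma):\overline{\mathbb F}_p(x)]<p$ one has $\overline{\mathbb F}_p(\Gamma)=\overline{\mathbb F}_p(\Gamma)^{p}(x)$, and after, if needed, replacing $P$ by its reduction in $y^{p^{k}}$ (harmless on the reduced curve) $\overline{\mathbb F}_p(\Gamma)$ is separable over $\overline{\mathbb F}_p(x)$ and over each $\overline{\mathbb F}_p(x,y_{i_0})$. Applying the trace to $F-\ell=\tilde h^{p}-\tilde h$ down to $\overline{\mathbb F}_p(x,y_{i_0})$ — using $\mathrm{Tr}(u^{p})=(\mathrm{Tr}\,u)^{p}$ and that all field degrees in sight are invertible mod $p$ — and exploiting linear disjointness when $C_{\mathbf t(\CY)}$ is irreducible, yields on $C$ a relation of the shape
$n_{i_0}\,f(x,y)=\xi^{p}-\xi+(\text{linear in }x,y)+\sigma_{i_0}(x)$
with $\xi\in\overline{\mathbb F}_p(C)$ and $\sigma_{i_0}\in\overline{\mathbb F}_p(x)$ an explicit $\mathbb F_p$-combination of $\mathrm{Tr}_{\overline{\mathbb F}_p(C)/\overline{\mathbb F}_p(x)}(f)$ and $\mathrm{Tr}_{\overline{\mathbb F}_p(C)/\overline{\mathbb F}_p(x)}(y)$, whose poles (by Step 1) lie only over $x=\infty$ of order $<p$. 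In parallel, the trace down to $\overline{\mathbb F}_p(x)$ shows $\sum_i n_i m_i\,\mathrm{Tr}(f)(x+h_i)$ is of the form $(\cdot)^{p}-(\cdot)+(\text{linear})$ over $\overline{\mathbb F}_p(x)$; comparing top coefficients, and using that a polynomial of degree $<p$ in the image of $u\mapsto u^{p}-u$ plus linear terms must have degree $\le 1$, produces the finite-difference identities $\sum_i n_i m_i=\sum_i n_i m_i h_i=\cdots=0$, their number dictated by $\deg\mathrm{Tr}(f)$. Since $|h_i|\le H=o(\log p)<p$ these are honest congruences, and with enough of them the Vandermonde structure forces $\sum_{i:\,h_i=\tau}n_i=0$ for each shift value $\tau$.

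It remains to finish. If some $n_{i_0}\ne0$ then $n_{i_0}$ is invertible mod $p$ (as $|n_{i_0}|\le j_1+j_2<p$), so the displayed relation exhibits $f$, up to the single extra summand $\tfrac{1}{n_{i_0}}\sigma_{i_0}(x)$, as linear on $C$, and the crux is that the hypotheses on $f$ force $\sigma_{i_0}(x)$ itself to be $(\cdot)^{p}-(\cdot)+(\text{linear})$ on $C$: in the polynomial case with $r_2$ linear, $\deg r_1\ge 3$ makes $\mathrm{Tr}(f)$ a polynomial of degree $\ge3$, so the preceding coefficient comparison is already impossible; in the case $r_2$ nonlinear on $C$, and in the non-polynomial case, one uses the $y$-dependence (resp.\ a pole) of $f$ on $C$ together with the degree and pole bounds on $\sigma_{i_0}$ to see that the relation would make $f$ genuinely linear on $C$, against hypothesis. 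Hence $n_{i_0}=0$; running over all $i_0$ gives all $n_i=0$, so $j_1=j_2$ and $F$ is trivial (the zero polynomial up to linear terms). I expect the genuine obstacle to lie in Steps 2--3 when $C_{\mathbf t(\CY)}$ is reducible and $\Gamma$ is one of the ``bad'' components excluded from Proposition \ref{prop51} and Corollary \ref{cor52}, where the linear-disjointness shortcut is unavailable and one must descend along the possibly non-Galois extension $\overline{\mathbb F}_p(\Gamma)/\overline{\mathbb F}_p(x,y_{i_0})$ directly, and in controlling the auxiliary function $\sigma_{i_0}(x)$ — especially in ruling out that the Artin--Schreier reduction at the places over $x=\infty$, where pole orders can legitimately exceed $p$, absorbs the nonlinearity of $f$; it is precisely the smallness of $H$ that should prevent this.
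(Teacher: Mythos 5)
Your approach diverges substantially from the paper's, and while the opening reduction (rewriting $F=\sum_i n_i\,f(x+h_i,y_i)$ and aiming to kill all $n_i$) is identical, the heavy machinery that follows — trace descent through $\overline{\mathbb F}_p(x,y_{i_0})$ and $\overline{\mathbb F}_p(x)$, and then a Vandermonde comparison of coefficients — is not what the paper does, and it leaves real gaps. The paper's argument is far shorter: in the polynomial case it observes that the Artin--Schreier form $F=\tilde h^p-\tilde h+\ell$ together with $\deg F<p$ forces $F$ to be \emph{linear} as a function on $\Gamma$, then exploits the fact that each projection $\pi_i:\Gamma\to C_i\cong C$ is surjective (hence $F$ must be linear in $y_i$, so $r_2$ is literally $c_2y$), reducing everything to the one-variable polynomial $R_1(x)=\sum_i m_ir_1(x+u_i)$, which cannot be linear under $\deg r_1\geq 3$; in the non-polynomial case it notes simply that the denominator $\prod_i f_2(x+u_i,y_i)$ is nonconstant of degree $<p$ on any component of $C_{\mathbf t}$, which is already incompatible with $F=\tilde h^p-\tilde h+\ell$. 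Compared with that, your route trades a surjectivity observation for a non-Galois trace descent whose validity on ``bad'' components you yourself flag as unresolved.

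Three concrete gaps remain. First, your Step 1 establishes (correctly) that the pole bound $<p$ forces $\tilde h$, hence $F$, to be regular at every finite place of $\Gamma$ — but you do not notice that in the non-polynomial case this is already a contradiction, since $F$ has genuine finite poles coming from $f_2$; you press on into Steps 2--3 instead of stopping there. Second, the final paragraph — ``one uses the $y$-dependence (resp.\ a pole) of $f$ on $C$ together with the degree and pole bounds on $\sigma_{i_0}$ to see that the relation would make $f$ genuinely linear on $C$'' — is the crux of the whole argument and is not carried out; the auxiliary term $\sigma_{i_0}(x)$ is precisely what could absorb the nonlinearity of $f$, and you give no actual mechanism ruling that out. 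Third, the trace-descent step implicitly requires a separability and linear-disjointness structure that you acknowledge breaks on reducible $C_{\mathbf t(\CY)}$, yet the proposition must hold on \emph{every} irreducible component, including the diagonal-type components that arise when some $h_i$ coincide; the paper's projection argument needs only that one of the $\pi_i$ is surjective, a condition that holds on every component since each $C_i$ is irreducible, and it is exactly this that your descent fails to replace.
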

Note that since $C^o_{\mathbf{t}}$ differs from $C_{\mathbf{t}}$ only be omitting finitely many points, the same conclusion holds (under the same assumption) for $C^o_{\mathbf{t}}$.
\begin{proof}
Since we are considering functions modulo $C$, we may assume that the $y$-degree of $f_1$ and $f_2$ are both smaller than the $y$-degree of $C$. Collect the terms in $F$ that correspond to the same indeterminate $y_i$ and renaming if necessary, we get
\begin{equation}\label{eqnprop53a}
F=m_1f(x+u_1,y_1)+\ldots+m_rf(x+u_r,y_r),
\end{equation}
where the $m_i$ are integers, $0<u_i\leq H$ may or may not be distinct, and the $y_i$ are distinct indeterminates. It suffices to show that $m_1,\ldots,m_r$ are all zero.

Assume on the contrary that not all $m_i$ are zero, then by removing the $m_i$ that are zero, we may assume that $m_i\neq 0$ for all $i$ in \eqref{eqnprop53a}. Let $\Gamma$ be a component of $C_{\mathbf{t}}$ such that $F$ is of the form $F=\tilde{h}^p-\tilde{h}+\text{(linear terms)}$ on $\Gamma$. Note that $\Gamma$ is a curve.

First suppose $f$ is a polynomial. Then $F$ is also a polynomial. Since $\deg{f}<p$, we also have $\deg{F}<p$. The special form of $F$ implies that $F$ is linear on $\Gamma$. Let $C_i$ be the curve defined by $P(x+u_i,y_i)=0$. Consider the $i$-th projection
\begin{equation*}
\pi_i: \Gamma \subseteq C_{\mathbf{t}} \longrightarrow C_i.
\end{equation*}
Since $C_i$ are isomorphic to $C$, they are absolutely irreducible. Thus at least one of the $\pi_i$ are surjective. Write $f(x,y)=r_1(x)+r_2(x,y)$, then
\begin{equation*}
F(x,y)=(m_1r_1(x+u_1)+\ldots+m_rr_1(x+u_r))+(m_1r_2(x,y_1)+\ldots+m_rr_2(x,y_r)).
\end{equation*}
The surjectivity of $\pi_i$ means that $F$ must be linear on $y_i$. 
By our construction on $r_2$ this implies $r_2(x,y)=c_2y$ is linear. Hence,
\begin{equation*}
R_1(x)=m_1r_1(x+u_1)+\ldots+m_rr_1(x+u_r)
\end{equation*}
is linear on $\Gamma$. Thus $R_1(x)$ itself is linear since $R_1$ depends only on $x$ but not the $y_i$'s. This is not possible since $\deg{r_1}\geq 3$ and $j_1+j_2=o(\log{p})$, which means the coefficient of $x^{\deg{r_1}-1}$ in $R_1(x)$ does not vanish.

On the other hand, suppose $\deg{f_2}\geq 1$. From \eqref{eqnprop53a}, we have
\begin{equation*}
F=\frac{F_1}{f_2(x+u_1,y_1)\ldots f_2(x+u_r,y_r)}
\end{equation*}
for some polynomial $F_1(x,y_1,\ldots,y_r)$. The assumptions that both $j_1+j_2$ and $\deg{f_2}$ are small implies that the denominator has degree less than $p$. Thus the denominator of $F$ is nonconstant and has degree less than $p$ on any irreducible component. So $F$ cannot be in the form $F=\tilde{h}^p-\tilde{h}+\text{(linear terms)}$ neither.
\end{proof}

\section{Computation of $S(j_1,j_2)$}\label{sec6}

In this section we will continue our calculation of the moments. The sum in \eqref{eqnsumrange} is the same as summing over the points on the curves $C_{\mathbf{h}}$ for all $\mathbf{h}=(h_1,\ldots,h_{j_1+j_2})$ with $0< h_i \leq H$, with the correspondence $P_i\leftrightarrow(x+h_i,y_i)$, i.e. 
\begin{equation}\label{eqnsplit1}
\sum_{x\in\CI}\sum_{\substack{P_{1}\in C, x<x(P_{1})\leq x+H \\ y(P_{1})\in\mathcal{J}}}\cdots\sum_{\substack{P_{j_1+j_2}\in C, x<x(P_{j_1+j_2})\leq x+H \\ y(P_{j_1+j_2})\in\mathcal{J}}} = \sum_{\mathbf{h}\in(0,H]^{j_1+j_2}}\sum_{\substack{(x,y_1,\ldots,y_{j_1+j_2})\in C_{\mathbf{h}} \\ x\in\CI, y_i\in\CJ}}
\end{equation}
However, we will need a finer splitting of the sum. Let $P_i=(x+h_i,y_i)$. We regard each tuple $(P_1,\ldots,P_{j_1+j_2})$ as a point on the curve
\begin{equation*}
P(x+h_i,y_i)=0 \,\,\forall \,\, 1\leq i\leq j_1+j_2,
\end{equation*}
but with $y_i,y_j$ stand for the same indeterminate if and only if $P_i=P_j$ (thus, in contrary to \eqref{defCH}, there may be duplicated equations above). In this way the tuple is viewed as a point on $C^o_{\mathbf{t}(\CY)}$, where $\mathbf{t}(\CY)$ is defined as in Section \ref{sec5}, and the contents inside the characters of \eqref{eqnsumrange} become rational functions on $C^o_{\mathbf{t}(\CY)}$. Each such tuple $(P_1,\ldots,P_{j_1+j_2})$ lies on exactly one of these curves $C^o_{\mathbf{t}(\CY)}$ for an $\mathbf{h}=(h_1,\ldots,h_{j_1+j_2})$ with $0<h_1\leq H$, and $\CY$ belongs to $\mathbf{h}$. Let 
\begin{equation*}
Y_{\mathbf{h}}=\{\CY: \CY \text{~belongs to~} \mathbf{h} \}.
\end{equation*}
Then the sum \eqref{eqnsumrange} can be split as
\begin{align}
S(j_1,j_2) &= \sum_{\mathbf{h}\in(0,H]^{j_1+j_2}}\sum_{\CY\in Y_{\mathbf{h}}}\sum_{\substack{x\in\CI,y_i\in\CJ \\ \mathbf{x}\in C^o_{\mathbf{t}(\CY)}}}
\chi\left( \frac{g(x+h_1,y_1)\ldots g(x+h_{j_1},y_{j_1})}{g(x+h_{j_1+1},y_{j_1+1})\ldots g(x+h_{j_1+j_2},y_{j_1+j_2})} \right) \nonumber \\ 
&\qquad \psi\left( \sum_{l=1}^{j_1}f(x+h_l,y_l)-\sum_{l=j_1+1}^{j_1+j_2}f(x+h_l,y_l) \right) \nonumber \\
&= \sum_{\mathbf{h}\in(0,H]^{j_1+j_2}}\sum_{\CY\in Y_{\mathbf{h}}}\sum_{\substack{x\in\CI,y_i\in\CJ \\ \mathbf{x}\in C^o_{\mathbf{t}(\CY)}}} \chi(G(x,\CY))\psi(F(x,\CY)), \label{eqnsplit2}
\end{align}
where
\begin{equation}\label{eqnGY}
G(x,\CY)=\frac{g(x+h_1,y_1)\ldots g(x+h_{j_1},y_{j_1})}{g(x+h_{j_1+1},y_{j_1+1})\ldots g(x+h_{j_1+j_2},y_{j_1+j_2})}
\end{equation}
and $F(x,\CY)$ is defined in \eqref{eqnFY}. Note that when $h_i$ are all distinct, the splitting \eqref{eqnsplit2} above is the same as \eqref{eqnsplit1}. To evaluate $S(j_1,j_2)$, we will need to consider two cases depending on whether $\psi$ is trivial or not.

\subsection{The case when $\psi$ is non-trivial}

In this subsection we assume that $\psi$ is non-trivial. We recall from \eqref{eqnFY} that
\begin{equation*}
F(x,\CY)=\sum_{j=1}^{j_1}f(x+h_j,y_j)-\sum_{j=j_1+1}^{j_1+j_2}f(x+h_j,y_j).
\end{equation*}
By Proposition \ref{prop53} (and our assumptions on $f$), the function $F(x,\CY)$ is nonlinear on $C^o_{\mathbf{t}(\CY)}$ unless $j_1=j_2=j$ and $F(x,\CY)$ is zero. When $f$ depends on $y$, this implies $(y_{j+1},\ldots,y_{2j})$ is a permutation of $(y_1,\ldots,y_j)$, and $G(x,\CY)$ is then automatically zero. When $f$ does not depend on $y$, $F(x,\CY)=0$ if $(h_{j+1},\ldots,h_{2j})$ is a permutation of $(h_1,\ldots,h_j)$, but then we will still need $(y_{j+1},\ldots,y_{2j})$ to be a permutation of $(y_1,\ldots,y_j)$ in order to make $G(x,\CY)$ zero unless $g$ does not depend on $y$. Thus if any of the $f$ or $g$ depends on $y$, Lemma \ref{lem:perel} is applicable unless $(y_{j+1},\ldots,y_{2j})$ to be a permutation of $(y_1,\ldots,y_j)$. If both $f$ and $g$ does not depend on $y$, then the sum is equivalent to one that has trivial $\psi$ and $g$ does not depend on $y$. Such sums will be treated in the next subsection.

Let $\CE$ be the set of $\mathbf{t}(\CY)$ that contribute to the diagonal terms, then the number of $\mathbf{t}(\CY)$ that are off-diagonal is $O(H^{j_1+j_2})$. Therefore, by Lemma \ref{lem:perel},
\begin{multline}\label{eqnoffdiag}
\sum_{\mathbf{h}\in(0,H]^{j_1+j_2}}\sum_{\substack{\CY\in Y_{\mathbf{h}} \\ \mathbf{t}(\CY)\notin\CE}}\sum_{\substack{x\in\CI,y_i\in\CJ \\ \mathbf{x}\in C^o_{\mathbf{t}(\CY)}}}
\chi(G(x,\CY))\psi(F(x,\CY)) \\
= O(d^{2(j_1+j_2)}H^{j_1+j_2}\sqrt{p}\log^{j_1+j_2+1}p).
\end{multline}
When $j_1\neq j_2$, all terms are non-diagonal and the above gives \eqref{eqnlemSb} in Lemma \ref{lemSj12}. 

Now suppose that $j_1=j_2=j$ and we have diagonal terms. In this case we have $F(x,\CY)=0$, and this automatically implies $G(x,\CY)=0$. Thus the contribution of each $\mathbf{t}(\CY)\in\CE$ to the sum is exactly the number of points on $C^o_{\mathbf{t}(\CY)}$ inside the box $\CB:=\CI\times [\alpha p,\beta p)$. We now count the number of such $\mathbf{t}(\CY)$. When $y_1,\ldots,y_j$ are all distinct (and $y_{j+1},\ldots,y_{2j}$ is a permutation of the $y_1,\ldots,y_j$), then $\mathbf{t}(\CY)=(h_1,\ldots,h_j)=\mathbf{h}$ and $C^o_{\mathbf{t}(\CY)}=C^o_{\mathbf{h}}=C_{\mathbf{h}}$, and there are a total of $H^j+O(j^2H^{j-1})$ such $\mathbf{h}$. Corollary \ref{cor52} shows that all but $O(H^{j-1})$ of the $C^o_{\mathbf{h}}$ are absolutely irreducible, and since the $\mathbf{F}_p$-points are uniformly distributed on an affine curve (see \cite[Corollary 2.7]{MaZa12}), each of these irreducible curves contribute
\begin{equation}\label{eqnNBirr}
N_{\CB}(C^o_{\mathbf{h}}) = \abs{\CI}(\beta-\alpha)^j+O(d^{2j}\sqrt{p}\log^{j+1}{p})
\end{equation}
to the main term. For those $C_{\mathbf{h}}$ that are not absolutely irreducible, the maximum possible number of components is bounded by the degree, which is $O(d^{2j})$. Thus
\begin{equation}\label{eqnNBnonirr}
N_{\CB}(C^o_{\mathbf{h}}) = O(d^{2j}\abs{\CI}).
\end{equation}
For any $(y_1,\ldots,y_j)$ with distinct components, there are $j!$ possible permutations of $(y_{j+1},\ldots,y_{2j})$, therefore these terms yield a total contribution of
\begin{align}\label{eqn61}
& j!(H^j+O(j^2H^{j-1}))(\abs{\CI}(\beta-\alpha)^j+O(d^{2j}\sqrt{p}\log^{j+1}{p}))+j!O(d^{2j}H^{j-1}\abs{\CI}) \nonumber \\
=~& j!H^j\abs{\CI}(\beta-\alpha)^j+O(j!j^2H^{j-1}d^{2j}\abs{\CI} + j!H^jd^{2j}\sqrt{p}\log^{j+1}{p})
\end{align}
to the main term. The other terms on the diagonal all have $y_{i_1}=y_{i_2}$ for some $i_1\neq i_2$ and $1\leq i_1,i_2\leq j$. The total number of such $t(\CY)$ is at most $O(j!H^{j-1})$, each such curve $C^o_{\mathbf{t}(\CY)}$ has at most $O(d^{2j}\abs{\CI})$ points, and there are at most $j!$ permutations of $(y_{j+1},\ldots,y_{2j})$. So these terms together give a contribution of
\begin{equation}\label{eqn62}
O((j!)^2H^{j-1}d^{2j}\abs{\CI})
\end{equation}
to the main term, which is small compared to \eqref{eqn61}. Combining \eqref{eqn61} and \eqref{eqn62}, we get the total contribution of the diagonal, which is
\begin{multline}\label{eqnondiag}
\sum_{\mathbf{h}\in(0,H]^{j_1+j_2}}\sum_{\substack{\CY\in Y_{\mathbf{h}} \\ \mathbf{t}(\CY)\in\CE}}\sum_{\substack{x\in\CI,y_i\in\CJ \\ \mathbf{x}\in C^o_{\mathbf{t}(\CY)}}}
\chi(G(x,\CY))\psi(F(x,\CY)) \\ 
= j!H^j\abs{\CI}(\beta-\alpha)^j+O((j!)^2H^{j-1}d^{2j}\abs{\CI} + j!H^jd^{2j}\sqrt{p}\log^{j+1}{p}).
\end{multline}
Finally, combining \eqref{eqnoffdiag} and \eqref{eqnondiag}, we obtain
\begin{equation*}
S(j,j)=j!H^j\abs{\CI}(\beta-\alpha)^j+O((j!)^2H^{j-1}d^{2j}\abs{\CI} + j!H^{2j}d^{4j}\sqrt{p}\log^{2j+1}{p}).
\end{equation*}
This gives \eqref{eqnlemSa} in Lemma \ref{lemSj12}.

\subsection{The case when $\psi$ is trivial}

Next we deal with the case when $\psi$ is trivial. Using the same splitting as in the previous case, \eqref{eqnsumrange} becomes
\begin{equation}\label{eqnpsi1}
S(j_1,j_2)=\sum_{\mathbf{h}\in(0,H]^{j_1+j_2}}\sum_{\CY\in Y_{\mathbf{h}}}\sum_{\substack{x\in\CI,y_i\in\CJ \\ \mathbf{x}\in C^o_{\mathbf{t}(\CY)}}} \chi(G(x,\CY)),
\end{equation}
where $G(x,\CY)$ is defined in \eqref{eqnGY}, i.e.
\begin{equation*}
G(x,\CY)=\frac{g(x+h_1,y_1)\ldots g(x+h_{j_1},y_{j_1})}{g(x+h_{j_1+1},y_{j_1+1})\ldots g(x+h_{j_1+j_2},y_{j_1+j_2})}.
\end{equation*}
Let $a$ be the order of $\chi$. Lemma \ref{lem:perel} is applicable when $G(x,\CY)$ is not a complete $a$-th power on any irreducible component of $C^o_{\mathbf{t}(\CY)}$. We need to consider two cases according to whether $g$ depends on $y$ or not.

First, if $g$ depends on $y$, then products and quotients of distinct $y_i$'s cannot be a complete $a$-th power. Hence, if the $g(x+h_i,y_i)$ stack up and become a complete $a$-th power, it must come from $a$ terms with the same $y_i$ (hence $h_i$), or that the same amount of such terms appear in both the numerator and the denominator. In this case, we may obtain a complete $a$-th power by having clusters that have the same $y_i$ in the numerator and the denominator of $G$, and group the remaining terms into clusters each consisting of $a$ terms with the same $y_i$. We will call the terms that can completely form clusters in $G$ the diagonal terms. Note that for $a=2$, both types of clusters need exactly two terms, and this will be the reason that the case $a=2$ exhibits a different behaviour. For the non-diagonal terms, we can apply Lemma \ref{lem:perel} to obtain the estimate
\begin{equation}\label{eqnpsioff}
\sum_{\substack{x\in\CI,y_i\in\CJ \\ \mathbf{x}\in C^o_{\mathbf{t}(\CY)}}} \chi(G(x,\CY))=O(d^{2(j_1+j_2)}\sqrt{p}\log^{j_1+j_2+1}p).
\end{equation}

If $j_1-j_2$ is not a multiple of $a$, then $G(x,\CY)$ cannot be a complete $a$-th power and there are no diagonal terms. All terms in \eqref{eqnpsi1} can be estimated using \eqref{eqnpsioff}. Thus
\begin{equation*}
S(j_1,j_2) = O(H^{j_1+j_2}d^{2(j_1+j_2)}\sqrt{p}\log^{j_1+j_2+1}p).
\end{equation*}
This is \eqref{eqnlemSe} of Lemma \ref{lemSj12}.

Now suppose $j_1-j_2=ma$ for some integer $m$. Let $j=\min\{j_1,j_2\}$. There are $j$ pairs of $g(x+h_i,y_i)$ that have the same $y_i$ in the numerator and the denominator of $G$, and the remaining terms are in $\abs{m}$ clusters each consisting of $a$ terms with the same $y_i$. For such $j_1$ and $j_2$, it is not difficult to count the total number of such $\mathbf{t}(\CY)$ that make $G$ a complete $a$-th power, which is
\begin{equation}\label{eqn64}
j!\frac{(\abs{m}a)!}{(a!)^{\abs{m}}\abs{m}!}H^{j+\abs{m}}(1+O(j^2/H))
\end{equation}
when $a>2$, and is
\begin{equation*}
\frac{(j_1+j_2)!}{2^{\frac{j_1+j_2}{2}}\left(\frac{j_1+j_2}{2}\right)!}H^{(j_1+j_2)/2}(1+O((j_1+j_2)^2/H))
\end{equation*}
when $a=2$. 

For $a>2$, $S(j_1,j_2)$ is the largest when $m=0$ and $j_1=j_2=j$. In this case, there are a total of $j!H^j(1+O(j^2/H))$ diagonal terms, and the corresponding $C^o_{\mathbf{t}(\CY)}$ are absolutely irreducible except for at most $H^{j-1}$ of them. Using \eqref{eqnNBirr} for irreducible ones, and \eqref{eqnNBnonirr} for reducible ones, we see that the contribution of such terms to the diagonal is
\begin{equation}\label{eqn63}
j!H^j\abs{\CI}(\beta-\alpha)^j+O(j!d^{2j}H^{j-1}\abs{\CI} + d^{2j}\sqrt{p}\log^{j+1}{p}).
\end{equation}
Combining \eqref{eqn63} with the error \eqref{eqnpsioff} from the off-diagonal terms (a total of $O(H^{2j})$ such terms), we get
\begin{equation*}
S(j,j) = j!H^j\abs{\CI}(\beta-\alpha)^j+O(j!d^{2j}H^{j-1}\abs{\CI} + d^{4j}H^{2j}\sqrt{p}\log^{j+1}{p}).
\end{equation*}
This proves \eqref{eqnlemSc} of Lemma \ref{lemSj12}. When $m\neq 0$, the total number of $\mathbf{t}(\CY)$ that contribute to the diagonal is at most $O([\frac{j_1+j_2-1}{2}]!H^{[\frac{j_1+j_2-1}{2}]})$, and so their contribution to the diagonal is at most $O([\frac{j_1+j_2-1}{2}]!d^{j_1+j_2-1}H^{[\frac{j_1+j_2-1}{2}]}\abs{\CI})$, which is small. Together with the non-diagonal terms, we get
\begin{multline*}
S(j_1,j_2) = \\
O(\left[\frac{j_1+j_2-1}{2}\right]!d^{j_1+j_2-1}H^{[\frac{j_1+j_2-1}{2}]}\abs{\CI} + d^{2(j_1+j_2)}H^{j_1+j_2}\sqrt{p}\log^{j_1+j_2+1}{p}),
\end{multline*}
which is \eqref{eqnlemSd} of Lemma \ref{lemSj12}.

The case for $a=2$ is slightly different. The off-diagonal terms can still be estimated by \eqref{eqnpsioff}, and when $j_1+j_2$ is odd, there are no diagonal terms. This gives \eqref{eqnlemSrb} in Lemma \ref{lemSj12real}. When $j_1+j_2$ is even, the number of $\mathbf{t}(\CY)$ that contributes to the diagonal is $\frac{(j_1+j_2)!}{2^{\frac{j_1+j_2}{2}}\left(\frac{j_1+j_2}{2}\right)!}H^{\frac{j_1+j_2}{2}}(1+O((j_1+j_2)^2/H))$, and the corresponding $C^o_\mathbf{t}(\CY)$ are absolutely irreducible except for $O(H^{\frac{j_1+j_2}{2}-1)}$ of them. Using \eqref{eqnNBirr} for irreducible ones, \eqref{eqnNBnonirr} for reducible ones, and combining this with the error from non-diagonal terms \eqref{eqnpsioff}, we obtain
\begin{multline*}
S(j_1,j_2) = \frac{(j_1+j_2)!}{2^{\frac{j_1+j_2}{2}}\left(\frac{j_1+j_2}{2}\right)!}H^{\frac{j_1+j_2}{2}}\abs{\CI}(\beta-\alpha)^{\frac{j_1+j_2}{2}} \\
+O(j!d^{j_1+j_2}H^{\frac{j_1+j_2}{2}-1}\abs{\CI} + d^{2(j_1+j_2)}H^{j_1+j_2}\sqrt{p}\log^{j_1+j_2+1}{p}),
\end{multline*}
which is \eqref{eqnlemSra}.

When $g$ does not depend on $y$, then the assumption that $\deg{g}$ is small ensures that product and quotients of distinct $h_i$'s cannot be a complete $a$-th power. Hence, if the $g(x+h_i,y_i)$ stack up and become a complete $a$-th power, all these terms must have the same $h_i$ (but contrary to the first case, here the $y_i$ need not be the same). In this case, we may obtain a complete $a$-th power by having clusters that have the same $h_i$ in the numerator and the denominator of $G$, and group the remaining terms into clusters each consisting of $a$ terms with the same $h_i$. We will use the splitting \eqref{eqnsplit1}, which depends only on $\mathbf{h}$. The calculations are very similar to the case when $g$ depends on $y$, and we have the same estimation as that case. We will calculate the case when $a>2$ and $j_1=j_2=j$, the other cases are similar and we leave them to the reader.

As in the previous case, the number of $\mathbf{h}$ that lie inside the diagonal is \eqref{eqn64} with $m=0$, for other terms we can apply Lemma \ref{lem:perel} and get
\begin{equation*}
\sum_{\substack{(x,y_1,\ldots,y_{j_1+j_2})\in C_{\mathbf{h}} \\ x\in\CI, y_i\in\CJ}} G(x,\CY) = O(d^{4j}H^{2j}\sqrt{p}\log{p}).
\end{equation*}
By Proposition \ref{prop51}, each of the $C_{\mathbf{h}}$ is absolutely irreducible except for at most $O(H^{j})$ of them. For those irreducible curves the estimate in \eqref{eqnNBirr} is applicable to $C_{\mathbf{h}}$ as well, and for those reducible curves \eqref{eqnNBnonirr} applies. Thus combining the above two equalities and the above estimate for off-diagonal terms, we have
\begin{equation*}
S(j,j) = j!H^j\abs{\CI}(\beta-\alpha)^j+O(j!d^{2j}H^{j-1}\abs{\CI} + d^{4j}H^{2j}\sqrt{p}\log{p}).
\end{equation*}

\section{Computation of the moments $M(r,s)$}

In this section we will finish our computation of the moments $M(r,s)$ and prove Theorem \ref{thm41} and \ref{thm42}. From \eqref{eqnMrs1}, we have
\begin{equation*}
M(r,s)=\frac{1}{2^{r+s}i^s}\sum_{j=0}^r\sum_{l=0}^s \binom{r}{j}\binom{s}{l}(-1)^{s-l}S(j+l,r+s-(j+l)).
\end{equation*}

When $S(x)$ is complex, we will apply \eqref{eqnlemSa} and \eqref{eqnlemSb} of Lemma \ref{lemSj12}. First suppose that $\psi$ is nontrivial. If $r+s$ is odd, there are no main terms. We have
\begin{align*}
M(r,s)&= O\left(\frac{1}{2^{r+s}}\sum_{j=0}^r\sum_{l=0}^s \binom{r}{j}\binom{s}{l} d^{2(r+s)}H^{r+s}\sqrt{p}\log^{r+s+1}p\right) \\
&= O(d^{2(r+s)}H^{r+s}\sqrt{p}\log^{r+s+1}p).
\end{align*} 
If $r+s=2t$ is even, the main terms are hit when $j+l=(r+s)/2=t$, and the rest of the terms can be estimated as above. In this case we get
\begin{align*}
M(r,s)&= \frac{1}{2^{r+s}i^s}\sum_{\substack{0\leq j\leq r, 0\leq l\leq s \\ j+l=t}} \binom{r}{j}\binom{s}{l}(-1)^{s-l}S(t,t)+O(d^{2(r+s)}H^{r+s}\sqrt{p}\log^{r+s+1}p) \\
&= \frac{t!H^t\abs{\CI}(\beta-\alpha)^t}{2^{r+s}i^s}\sum_{\substack{0\leq j\leq r, 0\leq l\leq s \\ j+l=t}} \binom{r}{j}\binom{s}{l}(-1)^{s-l} \\
&\qquad +O((t!)^2H^{t-1}d^{r+s}\abs{\CI} + 2^t t!d^{2(r+s)}H^{r+s}\sqrt{p}\log^{r+s+1}p).
\end{align*}

Next, if $\psi$ is trivial and $\chi$ has order $a>2$, then we apply \eqref{eqnlemSc}, \eqref{eqnlemSd} and \eqref{eqnlemSe} of Lemma \ref{lemSj12}. If $r+s$ is odd, we have
\begin{equation*}
M(r,s)= O(\left[\frac{r+s-1}{2}\right]!d^{r+s-1}H^{[\frac{r+s-1}{2}]}\abs{\CI} + d^{2(r+s)}H^{r+s}\sqrt{p}\log^{r+s+1}{p}),
\end{equation*}
while if $r+s=2t$ is even, we have
\begin{multline*}
M(r,s)= \frac{t!H^t\abs{\CI}(\beta-\alpha)^t}{2^{r+s}i^s}\sum_{\substack{0\leq j\leq r, 0\leq l\leq s \\ j+l=t}} \binom{r}{j}\binom{s}{l}(-1)^{s-l} \\
+O(t!d^{2t}H^{t-1}\abs{\CI} + d^{2(r+s)}H^{r+s}\sqrt{p}\log^{r+s+1}{p}).
\end{multline*}
Note that the main term here is the same as the case when $\psi$ is nontrivial, and the error term is slightly smaller.

Finally, if $S(x)$ is real, then we apply Lemma \ref{lemSj12real}. We are only interested in $M(k)=M(k,0)$ since we know that $M(r,s)=0$ when $s>0$. We have
\begin{equation*}
M(k)=\frac{1}{2^{k}}\sum_{j=0}^k \binom{k}{j}S(j,k-j).
\end{equation*}
If $k$ is odd, we have
\begin{equation*}
M(k) = O(H^{k}d^{2k}\sqrt{p}\log^{k+1}p),
\end{equation*}
which is the same as the case when $\psi$ is nontrivial. However, when $k$ is even, the main term is different from the other cases. In this case, the main term is contributed from every term since $j-(k-j)=2j-k$ is always even. Therefore we have
\begin{align*}
M(k)&= \frac{1}{2^{k}}\sum_{0\leq j\leq k}\binom{k}{j}\frac{k!}{2^{\frac{k}{2}}\left(\frac{k}{2}\right)!}\abs{\CI}H^{\frac{k}{2}}(\beta-\alpha)^{\frac{k}{2}} \\
&\qquad +O((k/2)!d^{k}H^{k/2-1}\abs{\CI} + d^{2k}H^{k}\sqrt{p}\log^{k+1}{p}) \\
&= \frac{k!}{2^{\frac{k}{2}}\left(\frac{k}{2}\right)!}\abs{\CI}H^{\frac{k}{2}}(\beta-\alpha)^{\frac{k}{2}} +O((k/2)!d^{k}H^{k/2-1}\abs{\CI} + d^{2k}H^{k}\sqrt{p}\log^{k+1}{p}).
\end{align*}

\section{Proof of Theorem \ref{thm1} and Theorem \ref{thm2}}

In this section we will finish off the proof of the main theorems and get the desired limiting Gaussian distribution. The derivation of the Gaussian distribution from the moments are pretty standard (see for example \cite{Fel68}), and we include them here for the sake of completeness. Again we will need to split into two cases depending on whether $S(x)$ is complex or real. We will follow (with modifications to suit our situations) the arguments in \cite{Lam13, MaZa11} below.

\subsection{The case when $S(x)$ is complex}

We first consider the case when $S(x)$ is complex. First we will normalize the sum $S(x)$ by setting
\begin{equation*}
\tilde{S}(x)=\frac{\sqrt{2}}{H^{\frac{1}{2}}(\beta-\alpha)^{\frac{1}{2}}}S(x).
\end{equation*}
Likewise we define the normalized moments
\begin{equation}\label{deftMrs}
\tilde{M}(r,s)=\frac{2^{\frac{r+s}{2}}}{H^{\frac{r+s}{2}}(\beta-\alpha)^{\frac{r+s}{2}}}M(r,s)=\sum_{x\in\CI}(\text{Re}\tilde{S}(x))^r(\text{Im}\tilde{S}(x))^s.
\end{equation}

\begin{remark}
The reason for such a normalization can be seen as follows. When $S(x)$ is complex, by Theorem \ref{thm41}, we have
\begin{equation*}
\frac{1}{\abs{\CI}}\sum_{x\in\CI}\abs{\text{Re}(S(x))}^2 = \frac{1}{\abs{\CI}}\sum_{x\in\CI}\abs{\text{Im}(S(x))}^2\sim\frac{H(\beta-\alpha)}{2}
\end{equation*}
as $p$ tends to infinity. Likewise, when $S(x)$ is real, by Theorem \ref{thm42}, we have
\begin{equation*}
\frac{1}{\abs{\CI}}\sum_{x\in\CI}\abs{S(x)}^2\sim H(\beta-\alpha).
\end{equation*}
This explain why we will need a different normalization for the real case below.
\end{remark}

The main term in Theorem \ref{thm41} (when $r+s=2t$ is even) is
\begin{equation*}
\frac{t!H^t\abs{\CI}(\beta-\alpha)^t}{2^{r+s}i^s}\sum_{\substack{0\leq j\leq r, 0\leq l\leq s \\ j+l=t}} \binom{r}{j}\binom{s}{l}(-1)^{s-l}.
\end{equation*}
Compare with Lemma \ref{lem31}, we obtain
\begin{multline}\label{eqn7a}
M(r,s)=\abs{\CI}(\beta-\alpha)^t E((\text{Re}Z_H)^r(\text{Im}Z_H)^s) \\
+O((t!)^2H^{t-1}d^{r+s}\abs{\CI} + 2^t t!d^{2(r+s)}H^{r+s}\sqrt{p}\log^{r+s+1}p).
\end{multline}

Let 
\begin{equation*}
\phi(u,v)=\frac{1}{\abs{\CI}}\sum_{x\in\CI}e^{iu\text{Re}\tilde{S}(x)+iv\text{Im}\tilde{S}(x)}
\end{equation*}
be the characteristic function of the joint distribution of $\text{Re}\tilde{S}(x)$ and $\text{Im}\tilde{S}(x)$. We will show the following about $\phi(u,v)$.
\begin{prop}\label{prop71}
Let $p$ be a large prime, and let $N$ be a positive integer such that $N\log{N}=O(\log{H})$ and $H^N=O(\abs{\CI}/p^{1/2+\delta})$ for any $\delta>0$ (as long as $\abs{\CI}$ has a higher order than $p^{1/2+\delta}$). Then for any real numbers $u$, $v$ such that $\abs{u},\abs{v}\leq H^{1/4}$, we have
\begin{multline*}
\phi(u,v)=e^{-\frac{u^2+v^2}{2}}\left(1+O\left(\frac{u^4+v^4}{H}\right)\right)+O\left(\frac{2^Nu^{2N}}{N!}+\frac{2^Nv^{2N}}{N!}+\frac{2^{2N}(uv)^{2N}}{(2N)!}\right) \\
+O((2N)!^2d^{4N}H^{-1}+2^NN!d^{4N}H^{4N}\abs{\CI}^{-1}\sqrt{p}\log^{4N}p)(1+u^{2N})(1+v^{2N})).
\end{multline*}
\end{prop}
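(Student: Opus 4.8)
The plan is to expand the characteristic function $\phi(u,v)$ into a Taylor series, truncate it at level $2N$, recognize the resulting partial sum as (a close approximation of) the characteristic function of the model variable $\tilde{Z}_H$ computed via moments, and then invoke Lemma \ref{lem32} to replace that partial sum by the Gaussian $e^{-(u^2+v^2)/2}$. The starting point is the identity
\begin{equation*}
\phi(u,v)=\frac{1}{\abs{\CI}}\sum_{x\in\CI}e^{iu\text{Re}\tilde{S}(x)+iv\text{Im}\tilde{S}(x)},
\end{equation*}
and for each $x$ I would write $e^{iu\text{Re}\tilde{S}(x)+iv\text{Im}\tilde{S}(x)}=\sum_{r,s\geq 0}\frac{(iu)^r(iv)^s}{r!s!}(\text{Re}\tilde{S}(x))^r(\text{Im}\tilde{S}(x))^s$ and split the double sum at $r+s< 2N$ versus $r+s\geq 2N$. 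Summing over $x\in\CI$, the head becomes $\sum_{r+s<2N}\frac{(iu)^r(iv)^s}{r!s!}\cdot\frac{\tilde{M}(r,s)}{\abs{\CI}}$, and the tail is bounded crudely using $\abs{\text{Re}\tilde{S}(x)},\abs{\text{Im}\tilde{S}(x)}\leq\abs{\tilde{S}(x)}\leq \frac{\sqrt2}{\sqrt{H(\beta-\alpha)}}\cdot N_{\CB_x}(C)$, which is $O(H^{1/2})$ on average — but more usefully one bounds the tail term by term using the moment estimates directly, giving the terms $\frac{2^Nu^{2N}}{N!}$, $\frac{2^Nv^{2N}}{N!}$, $\frac{2^{2N}(uv)^{2N}}{(2N)!}$ after using $\abs{u},\abs{v}\leq H^{1/4}$ to control the convergence of the geometric-type series beyond level $2N$.

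Next, I would analyze the head $\sum_{r+s<2N}\frac{(iu)^r(iv)^s}{r!s!}\cdot\frac{\tilde{M}(r,s)}{\abs{\CI}}$. Using the normalization \eqref{deftMrs} together with \eqref{eqn7a} and \eqref{eqnlem32}, for $r+s=2t$ even the main part of $\frac{\tilde M(r,s)}{\abs{\CI}}$ is exactly $E((\text{Re}\tilde Z_H)^r(\text{Im}\tilde Z_H)^s)$, and for $r+s$ odd the main part vanishes (by Lemma \ref{lem31}, matching Theorem \ref{thm41}); the error contributed to $\frac{\tilde M(r,s)}{\abs{\CI}}$ is, after dividing through by $(H(\beta-\alpha))^{(r+s)/2}$, of size $O((t!)^2 d^{2N}H^{-1}+2^{2N}(2N)! d^{4N}H^{2N}\abs{\CI}^{-1}\sqrt p\log^{4N}p)$ uniformly for $r+s<2N$. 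Summing these errors against $\frac{\abs{u}^r\abs{v}^s}{r!s!}$ and using $(t!)^2/(r!s!)\leq\binom{2t}{r}\leq 2^{2t}$ produces the last line of the claimed bound, the factors $(1+u^{2N})(1+v^{2N})$ absorbing the contribution of all monomials with $r,s<2N$. The surviving main part of the head is then $\sum_{r+s<2N}\frac{(iu)^r(iv)^s}{r!s!}E((\text{Re}\tilde Z_H)^r(\text{Im}\tilde Z_H)^s)$, which is the degree-$(2N-1)$ Taylor polynomial of $E(e^{iu\text{Re}\tilde Z_H+iv\text{Im}\tilde Z_H})$; bounding the Taylor remainder of $E(e^{iu\text{Re}\tilde Z_H+iv\text{Im}\tilde Z_H})$ again yields contributions of the form $\frac{2^Nu^{2N}}{N!}+\frac{2^Nv^{2N}}{N!}+\frac{2^{2N}(uv)^{2N}}{(2N)!}$, so that the head main part equals $E(e^{iu\text{Re}\tilde Z_H+iv\text{Im}\tilde Z_H})$ up to exactly those error terms. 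Finally, Lemma \ref{lem32} gives $E(e^{iu\text{Re}\tilde Z_H+iv\text{Im}\tilde Z_H})=e^{-(u^2+v^2)/2}(1+O((u^4+v^4)/H))$, and assembling all pieces gives the stated formula.

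The hard part will be the careful bookkeeping of the error terms: one must track how the factorials $(t!)^2$, $2^t t!$, the powers of $d$, and the powers of $H$ appearing in Theorem \ref{thm41} interact with the binomial/factorial weights $\frac{1}{r!s!}$ and with the normalization factor $(H(\beta-\alpha))^{-(r+s)/2}$, and then verify that summing over all $r+s<2N$ (finitely many terms, but with $N$ growing) does not destroy the shape of the bound — this is where the hypotheses $N\log N=O(\log H)$ and $H^N=O(\abs{\CI}/p^{1/2+\delta})$ enter, the former to keep $(2N)!$-type factors under control against $H^{-1}$ and against the Taylor-remainder thresholds, the latter to ensure the $\sqrt p\,\abs{\CI}^{-1}$ Weil-error term is genuinely a lower-order contribution. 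A secondary technical point is justifying that the tail of the exponential series (terms with $r+s\geq 2N$) really is dominated by its first shell $r+s=2N$ after inserting $\abs u,\abs v\leq H^{1/4}$; this requires a geometric-series argument using that each extra factor contributes at most $\abs u H^{1/4}/(\text{index})\leq H^{1/2}/(\text{index})$, which is $<1$ once the index exceeds $H^{1/2}$, combined with the a priori bound $\abs{\tilde S(x)}=O(H^{1/2})$. None of these steps is deep, but the precision demanded by the exact form of the error term in the proposition makes the estimation delicate. I would organize the argument as: (i) split head/tail, (ii) bound the tail, (iii) replace each head moment by the $\tilde Z_H$-moment plus error, (iv) recognize and bound the Taylor remainder for $\tilde Z_H$, (v) apply Lemma \ref{lem32}, (vi) collect errors.
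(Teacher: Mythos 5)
Your overall strategy --- Taylor-expand $\phi(u,v)$, replace the moments $\tilde M(r,s)/\abs{\CI}$ by the model moments $E((\text{Re}\tilde Z_H)^r(\text{Im}\tilde Z_H)^s)$ via Theorem~\ref{thm41} and \eqref{eqnlem32}, recognize the resulting truncated series as an approximation to $E(e^{iu\text{Re}\tilde Z_H+iv\text{Im}\tilde Z_H})$, and invoke Lemma~\ref{lem32} --- is exactly what the paper does, and your steps (iii)--(vi) match the paper's argument. The gap is in steps (i)--(ii). You truncate the double series along the \emph{triangle} $r+s<2N$ and claim that the tail $\sum_{r+s\ge 2N}\frac{\abs{u}^r\abs{v}^s}{r!s!}\,\tilde M(r,s)/\abs{\CI}$ is dominated by its first shell $r+s=2N$ via a geometric-series argument. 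That is false: the terms of this tail \emph{grow} long before they decay. Taking $r=s=t$ with $t$ even and $\abs{u}=\abs{v}=H^{1/4}$, the model main term of $\tilde M(t,t)/\abs{\CI}$ is $\approx ((t-1)!!)^2$, and
\begin{equation*}
\frac{\abs{u}^t\abs{v}^t}{(t!)^2}\,((t-1)!!)^2 \;=\; \frac{(H^{1/2})^t}{2^t((t/2)!)^2} \;=\; \frac{(H/4)^{t/2}}{((t/2)!)^2},
\end{equation*}
which increases monotonically until $t\approx\sqrt H$ and peaks at size roughly $e^{\sqrt H}$. Since $2N=O(\log H/\log\log H)\ll\sqrt H$, the cutoff sits well inside the growth regime, so the absolute-value tail is astronomically larger than the claimed $2^Nu^{2N}/N!$ bound; the true tail is small only because of cancellation from the $i^{r+s}$ signs, which a termwise absolute-value estimate throws away. (There is also a secondary problem: Theorem~\ref{thm41}, through Proposition~\ref{prop53}, only supplies moment estimates for $r+s=o(\log p)$, so they are not even available near the peak.)

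The paper sidesteps this entirely by using a \emph{rectangular} truncation: writing $e^{iu\text{Re}\tilde S(x)+iv\text{Im}\tilde S(x)}$ as a product of two exponentials and applying the exact Taylor remainder $\left|e^{ix}-\sum_{j<2N}\frac{(ix)^j}{j!}\right|\le \frac{\abs{x}^{2N}}{(2N)!}$ to each factor separately. Because each truncated factor equals $e^{ix}$ minus a remainder of modulus $\le \abs{x}^{2N}/(2N)!$, hence has modulus $\le 1+\abs{x}^{2N}/(2N)!$, the total truncation error is controlled by the three moments $\tilde M(2N,0)$, $\tilde M(0,2N)$, $\tilde M(2N,2N)$ alone, which directly produce the $\frac{2^Nu^{2N}}{N!}+\frac{2^Nv^{2N}}{N!}+\frac{2^{2N}(uv)^{2N}}{(2N)!}$ error; the same product structure is then reused to convert the truncated model series back into $E(e^{iu\text{Re}\tilde Z_H+iv\text{Im}\tilde Z_H})$. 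Replace your triangle cut by this rectangular one (i.e.\ truncate $\sum_{r=0}^{2N-1}\sum_{s=0}^{2N-1}$) and the remainder of your plan goes through unchanged.
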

\begin{proof}
Using the Taylor expansion
\begin{equation*}
e^{ix}=\sum_{j=0}^{2N-1}\frac{(ix)^j}{j!}+O\left( \frac{x^{2N}}{(2N)!} \right), 
\end{equation*}
we have
\begin{equation}\label{eqn7b}
\phi(u,v)=\frac{1}{\abs{\CI}}\sum_{x\in\CI}\sum_{r=0}^{2N-1}\sum_{s=0}^{2N-1}\frac{(iu\text{Re}\tilde{S}(x))^r(iv\text{Im}\tilde{S}(x))^s}{r!s!}+E,
\end{equation}
where the error term $E$ is
\begin{align*}
E &= O\left( \frac{1}{\abs{\CI}}\sum_{x\in\CI}\left( \frac{(u\text{Re}\tilde{S}(x))^{2N}}{(2N)!}+\frac{(v\text{Im}\tilde{S}(x))^{2N}}{(2N)!}+\frac{(uv\text{Re}\tilde{S}(x)\text{Im}\tilde{S}(x))^{2N}}{(2N)!^2} \right) \right) \\
&= O\left( \frac{1}{\abs{\CI}}\left(\frac{u^{2N}}{(2N)!}\tilde{M}(2N,0)+\frac{v^{2N}}{(2N)!}\tilde{M}(0,2N)+\frac{(uv)^{2N}}{(2N)!^2}\tilde{M}(2N,2N)\right) \right).
\end{align*}
By Theorem \ref{thm41} and \eqref{deftMrs}, this gives
\begin{equation}\label{eqn7c}
E = O\left(\frac{2^Nu^{2N}}{N!}+\frac{2^Nv^{2N}}{N!}+\frac{2^{2N}(uv)^{2N}}{(2N)!}\right)
\end{equation}
when $H=o(\log{p})$. For the main term, which we denoted by $\CM$, consider
\begin{align*}
\CM &= \frac{1}{\abs{\CI}}\sum_{x\in\CI}\sum_{r=0}^{2N-1}\sum_{s=0}^{2N-1}\frac{(iu\text{Re}\tilde{S}(x))^r(iv\text{Im}\tilde{S}(x))^s}{r!s!} \\
&= \frac{1}{\abs{\CI}}\sum_{r=0}^{2N-1}\sum_{s=0}^{2N-1}\frac{(iu)^r(iv)^s}{(H/2)^{(r+s)/2}(\beta-\alpha)^{(r+s)/2}r!s!}M(r,s).
\end{align*}
Now invoke \eqref{eqn7a} and \eqref{eqnlem32}, we get
\begin{multline}\label{eqn7d}
\CM = \sum_{r=0}^{2N-1}\sum_{s=0}^{2N-1}\frac{(iu)^r(iv)^s}{r!s!}E((\text{Re}\tilde{Z_H})^r(\text{Im}\tilde{Z_H})^s) \\
+O(((2N)!^2d^{4N}H^{-1}+2^NN!d^{4N}H^{4N}\sqrt{p}\log^{4N}p\abs{\CI}^{-1})(1+u^{2N})(1+v^{2N})).
\end{multline}
Again from the Taylor series, the main term above is
\begin{align*}
&\sum_{r=0}^{2N-1}\sum_{s=0}^{2N-1}\frac{(iu)^r(iv)^s}{r!s!}E((\text{Re}\tilde{Z_H})^r(\text{Im}\tilde{Z_H})^s) \\
=& \left(\sum_{r=0}^{2N-1}\frac{(iu\text{Re}\tilde{Z_H})^r}{r!}\right)\left(\sum_{s=0}^{2N-1}\frac{(iv\text{Im}\tilde{Z_H})^s}{s!}\right) \\
=& E\left( \left(e^{iu\text{Re}\tilde{Z_H}}+O\left(\frac{(u\text{Re}\tilde{Z_H})^{2N}}{(2N)!}\right)\right)\left(e^{iv\text{Im}\tilde{Z_H}}+O\left(\frac{(v\text{Im}\tilde{Z_H})^{2N}}{(2N)!}\right)\right) \right) \\
=& E\left(e^{iu\text{Re}\tilde{Z_H}+iv\text{Im}\tilde{Z_H}}\right)+O\left( \frac{2^Nu^{2N}}{N!}+\frac{2^Nv^{2N}}{N!}+\frac{2^{2N}(uv)^{2N}}{(2N)!} \right),
\end{align*}
where the error term is calculated in the same way as we calculate $E$. Now combining the above equation, \eqref{eqn7b}, \eqref{eqn7c}, \eqref{eqn7d} and Lemma \ref{lem32} yields Proposition \ref{prop71}.
\end{proof}

Theorem \ref{thm1} now follows from Proposition \ref{prop71} by exactly the same arguments in \cite[Section 4]{Lam13}, which is also the method that Selberg used in his proof that $\log\zeta(1/2+it)$ has a limiting two-dimensional Gaussian distributon (see \cite{Tsa84,Sel92}).

\subsection{The case when $S(x)$ is real}

When $S(x)$ is real, we will need a different normalization in the form of
\begin{equation*}
S_R(x)=\frac{1}{H^{\frac{1}{2}}(\beta-\alpha)^{\frac{1}{2}}}S(x),
\end{equation*}
and
\begin{equation*}
M_R(k)=\sum_{x\in\CI}S_R(x)^k.
\end{equation*}
The main term in Theorem \ref{thm42} (for $k$ even) is
\begin{equation*}
\frac{k!}{2^{\frac{k}{2}}\left(\frac{k}{2}\right)!}\abs{\CI}H^{\frac{k}{2}}(\beta-\alpha)^{\frac{k}{2}} = 1\cdot 3\cdot\ldots\cdot (k-1)\cdot \abs{\CI}\cdot H^{\frac{k}{2}}(\beta-\alpha)^{\frac{k}{2}}.
\end{equation*}
when $k$ is even. Write
\begin{equation*}
\mu_k = 
\begin{cases}
1\cdot 3\cdot\ldots\cdot (k-1) &, k \text{~even}, \\
0 &, k \text{~odd}.
\end{cases}
\end{equation*}
Then we have
\begin{equation*}
\lim_{p\rightarrow\infty}\frac{M_R(k)}{\abs{\CI}}=\mu_k.
\end{equation*}
That is
\begin{equation}\label{eqn71}
\lim_{p\rightarrow\infty}\frac{1}{\abs{\CI}}\sum_{x\in\CI}S_R(x)^k=\mu_k.
\end{equation}

Recall that $G_p(\lambda)$ is the number of $x\in\CI$ such that $S(x)\leq \lambda(H(\beta-\alpha))^{1/2}$, i.e. the number of $x\in\CI$ with $S_R(x)\leq\lambda$. Then $G_p(\lambda)$ is a monotonic increasing step-function of $\lambda$, with discontinuities at $\lambda=\lambda_1,\ldots,\lambda_h$, say. Note that $G_p(\lambda)=0$ if $\lambda<-H$, and $G_p(\lambda)=\abs{\CI}$ if $\lambda\geq H$. Collect together the values of $x\in\CI$ for which $S_R(x)=\lambda_i$, then \eqref{eqn71} gives (with the convention $G_p(\lambda_0)=0$)
\begin{equation*}
\lim_{p\rightarrow\infty}\frac{1}{\abs{\CI}}\sum_{j=1}^h (\lambda_j)^k (G_p(\lambda_j)-G_p(\lambda_{j-1})) = \mu_k.
\end{equation*}
The LHS in the above equation can be written as a Riemann-Stieltjes integral
\begin{equation*}
\frac{1}{\abs{\CI}}\sum_{j=1}^h (s_j)^k (G_p(\lambda_j)-G_p(\lambda_{j-1})) = \int_{-\infty}^{\infty}t^k \,d\phi_p(t),
\end{equation*}
where
\begin{equation*}
\phi_p(t) = \frac{1}{\abs{\CI}}G_p(\lambda).
\end{equation*}

On the other hand, set
\begin{equation*}
\phi(t)=\frac{1}{\sqrt{2\pi}}\int_{-\infty}^t e^{-u^2/2}\,du,
\end{equation*}
that is, the distribution function for the standard Gaussian distribution. Then we have
\begin{equation*}
\int_{-\infty}^{\infty}t^k\,d\phi(t)=\frac{1}{\sqrt{2\pi}}\int_{-\infty}^{\infty}t^k e^{-t^2/2}\,dt=\mu_k
\end{equation*}
for all $k$. From this, one can deduce from probability theory (see \cite{Fel68}) that
\begin{equation*}
\lim_{p\rightarrow\infty}\phi_p(t) = \phi(t).
\end{equation*}
This proves Theorem \ref{thm2}.

\subsection*{Acknowledgement}

The author would like to thank Douglas Ulmer for many valuable discussions on the algebraic geometry involved in this paper. In particular a strategy suggested by him led to the proof of Proposition \ref{prop51}.


\end{document}